\documentclass{amsart}

\usepackage{mathrsfs}
\usepackage{amscd}
\usepackage{amsmath}
\usepackage{amssymb}
\usepackage{amsthm}
\usepackage{epsf}
\usepackage{latexsym}
\usepackage{verbatim}
\usepackage[all, cmtip]{xy}
\usepackage{tikz}
\usetikzlibrary{calc}
\usetikzlibrary{positioning}
\usetikzlibrary{matrix}
\usetikzlibrary {graphs}
\usepackage{float}
\usepackage[hidelinks]{hyperref}
\usepackage{comment}
\usepackage{enumitem}
\usepackage{subcaption}
\usepackage{tikz-cd}
\usepackage[capitalise]{cleveref}

\tikzstyle{bsq}=[rectangle, draw, thick, minimum width=.5cm, minimum height=.5cm]
\tikzstyle{bver}=[rectangle, draw, thick, minimum width=1cm, minimum height=2cm]
\tikzstyle{bhor}=[rectangle, draw, thick, minimum width=2cm, minimum height=1cm]

\tikzstyle{divisor}=[circle,very thick,draw,scale=0.3,fill=white]
\tikzstyle{vertex}=[circle,draw,scale=0.3,fill=black]
\usetikzlibrary{patterns}

\usepackage[left=3.2cm, right=3.2cm]{geometry}

\newtheorem{theorem}{Theorem}[section]
\newtheorem{lemma}[theorem]{Lemma}

\newtheorem{corollary}[theorem]{Corollary}
\newtheorem{proposition}[theorem]{Proposition}

\newtheorem{varexample}[theorem]{Example}

\theoremstyle{definition}

\newtheorem{definition}[theorem]{Definition}

\newcommand{\Z}{\mathbb{Z}}
\newcommand{\R}{\mathbb{R}}

\newcommand{\ord}{\operatorname{ord}}
\newcommand{\Trop}{\operatorname{Trop}}

\newcommand{\ddiv}{\operatorname{div}}

\newcommand{\Jac}{\operatorname{Jac}}

\newcommand{\Pic}{\operatorname{Pic}}

\newcommand{\rk}{\operatorname{rk}}

\newcommand{\WG}{\widetilde{\Gamma}}
\newcommand{\WC}{\widetilde{C}}

\newcommand{\Mod}[1]{\ (\mathrm{mod}\ #1)}

\newenvironment{example}{\begin{varexample}
\begin{normalfont}}{\end{normalfont}
\end{varexample}}

\begin{document}

\title{Lifting Prym Divisors on a Bielliptic Loop of Loops}
\author[Aung]{Phew Aung}
\author[Burkholder]{Eric Burkholder}
\author[Dalton]{Joan Dalton}
\author[Jensen]{David Jensen}
\author[Letamendi]{Jacob Letamendi}
\author[Nguyen]{Brianna Nguyen}
\author[Radicker]{Andrew Radicker}
\author[Rivera Laboy]{Doel Rivera Laboy}

\bibliographystyle{alpha}

\begin{abstract}
Let $C$ be a bielliptic curve over an algebraically closed nonarchimedean field of odd characteristic or characteristic zero, and let $f \colon \WC \to C$ be an \'{e}tale double cover.  We prove that, if $f$ specializes to a certain double cover of a loop of loops, then every Prym divisor on the loop of loops lifts to a Prym divisor of the same rank on $\WC$.  As consequences, we compute the dimension of the Prym-Brill-Noether variety $V^r (C,f)$, show that it is reduced and irreducible when the dimension is positive, and compute its cohomology class.
\end{abstract}

\maketitle

\setcounter{tocdepth}{1}

\section{Introduction}
\label{Sec:Intro}

If $f \colon \WC \to C$ is an \'{e}tale double cover of curves, a \emph{Prym divisor class} on $\WC$ is a divisor class with canonical norm.  The set of Prym divisor classes on $\WC$ has two connected components:
\begin{align*}
P^0 (C,f) &:= \{ [D] \in \Pic (\WC) \mid \mathrm{Nm}_f [D] = K_C, \rk (D) \equiv 0 \Mod{2} \} \\
P^1 (C,f) &:= \{ [D] \in \Pic (\WC) \mid \mathrm{Nm}_f [D] = K_C, \rk (D) \equiv 1 \Mod{2} \} .
\end{align*}
Given a nonnegative integer $r$, the \emph{Prym-Brill-Noether variety} is defined as:
\[
V^r (C,f) := \{ [D] \in P^r (C,f) \mid \rk (D) \geq r \}.
\]
The Prym-Brill-Noether variety has many properties in common with the classical Brill-Noether variety $W^r_d (C)$ parameterizing divisor classes of fixed degree $d$ and rank at least $r$ on a curve $C$.

In \cite{PBN}, the fourth author studies the Prym-Brill-Noether theory of \'{e}tale double covers $f \colon \WC \to C$ where $C$ is a cover of an elliptic curve.  More precisely, a curve $C$ is called $k$-\emph{elliptic} if there exists a degree-$k$ map from $C$ to a curve of genus one that does not factor through an isogeny.  The $k$-elliptic locus in $\mathcal{R}_g$ is the moduli space of \'{e}tale double covers $f \colon \WC \to C$ where $C$ is a $k$-elliptic curve of genus $g$.  For a nonempty open subset of the $k$-elliptic locus in $\mathcal{R}_g$, \cite[Theorem~1.1]{PBN} provides an upper bound on the dimension of the Prym-Brill-Noether variety $V^r (C,f)$.  When $k \geq r$, this upper bound is achieved, meaning that for a nonempty open subset of the $k$-elliptic locus in $\mathcal{R}_g$, \cite[Theorem~1.1]{PBN} computes the dimension of $V^r (C,f)$.  When $k<r$, it remains an open question to compute this dimension.

The approach of \cite{PBN} is via tropical geometry.  The paper starts with a tropical curve $\Gamma$ known as the \emph{loop of loops}, constructs an \'{e}tale double cover $\pi \colon \WG \to \Gamma$, and then completely describes the tropical Prym-Brill-Noether variety $V^r (\Gamma,\pi)$.  If $f \colon \WC \to C$ is an \'{e}tale double cover of curves specializing to $\pi$, then the tropicalization of $V^r (C,f)$ is contained in $V^r (\Gamma,\pi)$.  Since tropicalization preserves dimension, it follows that the dimension of $V^r (C,f)$ is less than or equal to that of $V^r (\Gamma,\pi)$, establishing the bound in \cite[Theorem~1.1]{PBN}.  When $k \geq r$, \cite[Theorem~1.2]{PBN} provides a lifting result, showing that the tropicalization of $V^r (C,f)$ is in fact \emph{equal} to $V^r (\Gamma,\pi)$.  The main result of this paper is an analogous lifting result when $k=2$.

\begin{theorem}
\label{Thm:Lifting}
Let $\Gamma$ be a bielliptic loop of loops of genus $g$ and $\pi \colon \WG \to \Gamma$ the quotient by the antipodal involution.  Let $K$ be an algebraically closed nonarchimedean field of odd residue characteristic or characteristic zero, let $C$ be a bielliptic curve of genus $g$ over $K$, and $f \colon \WC \to C$ an \'{e}tale double cover that specializes to $\pi$.  Then every divisor class in $V^r (\Gamma,\pi)$ lifts to a divisor class in $V^r (C,f)$.
\end{theorem}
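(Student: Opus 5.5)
Our plan follows the strategy of \cite[Theorem~1.2]{PBN}, adapted to $k=2$. First, use the bielliptic structure to reduce the lifting problem to line bundles pulled back from the elliptic target. On the tropical side, the bielliptic loop of loops $\Gamma$ is a double cover of a genus-one tropical curve $E$: a cycle of $g-1$ vertices, each carrying a loop. The double cover $\WG \to \Gamma$ composes to a degree-4 map $\WG \to E$. By \cite{PBN}, $V^r(\Gamma,\pi)$ is a union of tori. We would first write down explicitly the divisor classes in each torus, using the combinatorial description from \cite{PBN} in terms of tableaux on the loops. The aim is to express each class as
\[
D = \psi^*(\text{divisor on a cover of } E) + (\text{effective part supported on loops}),
\]
or as such a pullback twisted by a fixed Prym divisor of small rank.

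Second, we lift each ingredient algebraically. Let $\psi\colon C \to \mathcal{E}$ be the bielliptic map and $\widetilde{\psi}\colon \WC \to \widetilde{\mathcal{E}}$ the induced map. We need the étale double cover to be compatible with $\psi$, which can be arranged generically or handled by cases. Line bundles of degree $d$ on an elliptic curve $\mathcal{E}$ have rank $d-1$ and tropicalize surjectively onto $\Pic^d$ of the tropical elliptic curve. So pullbacks give divisors on $\WC$ with rank at least $d-1$, which become rank $\geq r$ after adding base points. We then check that the norm condition $\mathrm{Nm}_f[D]=K_C$ can be imposed. Because $K_C = \psi^*(\text{branch-type divisor})$ up to the ramification divisor $R$, the residual effective part must have norm equal to $R$ plus a correction, and points of $R$ specialize to the loops. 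The Prym variety is a torsor under $\ker(1+\iota)^0$, and tropicalization $\mathrm{Prym}(\WC)\to\mathrm{Prym}(\WG)$ is surjective. Hence we can adjust by Prym-translations that preserve the pullback structure; this is where we use the $\iota$-anti-invariant part of $\Pic^0(\widetilde{\mathcal{E}})$.

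Third, we count dimensions. Let $Y \subset V^r(C,f)$ be the algebraic family constructed in the second step. We check that its tropicalization is a full torus of $V^r(\Gamma,\pi)$, with dimension matching. Since tropicalization of an irreducible variety is connected and pure of the same dimension, and the constructed points tropicalize onto a dense subset of each torus, the closedness of tropicalization of $V^r(C,f)$ gives equality. Parity of rank is automatic: the lifted class lies in the same connected component as its tropicalization, since the component is determined by specialization.

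The main obstacle is the second step: showing that the tori in $V^r(\Gamma,\pi)$ with $r>2$ come from such a pullback construction. For these $r$, the argument of \cite{PBN} for $k\geq r$ does not apply, because a single elliptic pencil gives rank only $d-1$ while the degree is constrained. Our first attempt would be to take $D = \widetilde{\psi}^*L \otimes \mathcal{O}(F)$, with $L$ on $\widetilde{\mathcal{E}}$ of degree $d$ and $F$ a fixed effective divisor of ramification points. We would then use Riemann–Roch on $\widetilde{\mathcal{E}}$, together with the pushforward decomposition $\widetilde{\psi}_*\mathcal{O}_{\WC}$, to bound $h^0$ from below by $r+1$. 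The existence of lifts of the needed ramification points uses the odd residue characteristic, which gives tame ramification and lifting of the double cover.
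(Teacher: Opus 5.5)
There is a genuine gap. Your second and third steps carry the whole theorem, and you do not carry them out. The mechanism you sketch for the second step also cannot work.

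\textbf{The proposed family is too small.} Write $\widetilde{\mathcal E}$ for the genus-one quotient ($C_1$ in the paper, so $\widetilde\psi=f_1$).
\begin{itemize}
\item With $F$ a fixed divisor of ramification points, the classes $[\widetilde\psi^*L+F]$, $L\in\Pic^{r+1}(\widetilde{\mathcal E})$, form a family of dimension at most one.
\item We have $\mathrm{Nm}_f(\widetilde\psi^*L)=p^*\mathrm{Nm}_{p_1}(L)$, and on $\Pic^0$ both $p^*$ and $\mathrm{Nm}_{p_1}$ have finite kernel. So imposing $\mathrm{Nm}_f=K_C$ leaves only finitely many classes.
\item The ``Prym translations'' do not restore moduli. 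The involution $\iota$ descends to $\widetilde{\mathcal E}$ as a fixed-point-free involution, i.e.\ a translation, which acts trivially on $\Pic^0(\widetilde{\mathcal E})$. So its anti-invariant part is just the $2$-torsion. Translating by a general Prym element would not preserve rank anyway.
\end{itemize}
The maximal tori $T(Q)$ have dimension $g-2-r$, so this family cannot reach them. The continuous parameters actually come from the third quotient $C_2=\WC/\langle\iota_2\rangle$, which your proposal never uses. The right family is the classes $f_1^*[D_1]+f_2^*[D_2]$ with $\deg D_1=r+1$ on $C_1$ and $D_2\geq 0$ of degree $g-r-2$ on $C_2$. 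This is the variety $Y_{r+1}$, contained in $V^r(C,f)$ by Lemma~\ref{Lem:PBNContains}.

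\textbf{Compatibility with the bielliptic map.} This cannot be ``arranged generically'', because the theorem concerns every $f$ specializing to $\pi$. One must prove that $p\circ f$ is Galois with $g(C_1)=1$. The paper does this in Lemma~\ref{Lem:Nonvacuous}, using uniqueness of the lift of $\pi$ and Naranjo's criterion $\tau^*\eta=\eta$.

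\textbf{The third step does not close the argument.} Even with the correct family, knowing that $\Trop$ of an irreducible $(g-r-2)$-dimensional family is closed, connected and pure of that dimension is not enough. It does not show that all $2\binom{g-1}{r+1}$ maximal tori are hit, let alone every point of each. You would have to prove two things:
\begin{itemize}
\item the purely tropical statement that every class in every $T(Q)$ has the form $\pi_1^*[A_1]+\pi_2^*[A_2]$, with $A_2$ effective and satisfying the norm condition;
\item that tropicalization commutes with these pullbacks and norms.
\end{itemize}
This is exactly the ``main obstacle'' you leave open.

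\textbf{What the paper does instead.} It avoids any construction and uses a counting argument.
\begin{enumerate}
\item Reduce to a vertex-avoiding $[D]$.
\item Cut $V^r(\Gamma,\pi)$ by $g-2-r$ general translates of $V^1(\Gamma,\pi)$ to get a finite set $X\ni[D]$ with $\#X=2\frac{(g-1)!}{(r+1)!}$ (Lemma~\ref{Lem:TropIntersection}).
\item Show the classical analogue $\mathcal X$ has at least that many points, via the class computation with $m_r\geq 2$ (Lemma~\ref{Lem:Intersection}). This rests on $V^r(C,f)=Y_{r+1}$, a degeneration, and Matsusaka--Ran.
\item Prove $\Trop|_{\mathcal X}$ is injective, using the local complete intersection description of Proposition~\ref{Prop:LCI} together with Rabinoff's theorem.
\end{enumerate}
Injectivity plus $\#\mathcal X\geq\#X$ forces $[D]$ to lift. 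The case $r=1$ is quoted from \cite[Lemma~5.1]{PBN}.
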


Lifting results like Theorem~\ref{Thm:Lifting} are of fundamental importance in tropical geometry.  If one knows that a given tropical object is the tropicalization of a classical algebro-geometric object, then one can hope to use it to prove results in classical algebraic geometry.  Because of this, lifting theorems are ubiquitous in the subject.  For example, lifting theorems have been proven for the Brill-Noether variety of a chain of loops with generic edge lengths \cite{CJP, JensenRanganathan21}, maximal components of the Brill-Noether variety for a $k$-uniform chain of loops \cite{JensenRanganathan21, CookPowellJensen22}, the Prym-Brill-Noether variety of a loop of loops with generic edge lengths \cite{PBN}, harmonic morphisms of tropical curves \cite{ABBR15a}, and abelian covers of tropical curves \cite{LenUlirschZakharov}, just to name a few.  All of these results have been used in applications to classical algebraic geometry \cite{MRC, JensenRanganathan21, CookPowellJensen22b, M23, M13, PBN}.

The proof of Theorem~\ref{Thm:Lifting} involves three main steps.  First, consider a chain of double covers $\WC \xrightarrow{f} C \xrightarrow{h} E$, where $f$ is \'{e}tale, $C$ has genus $g$, and $E$ has genus 1.  The Prym varieties of such covers are studied in \cite{Debarre, Horing, Naranjo}.  In particular, if the cover $h \circ f \colon \WC \to E$ is Galois, then the Galois group is $\Z/2\Z \times \Z/2\Z$, and thus the map $h \circ f$ factors through two additional maps $f_1 \colon \WC \to C_1$ and $f_2 \colon \WC \to C_2$ in addition to $f$.  We let $\mathcal{R}_{B,g,t}$ denote the space of such covers where $g(C_1) = t+1$.  In \cite{Debarre}, Debarre defines a variety $Z_a$ consisting of certain pairs of divisor classes in $\Pic (C_1) \times \Pic (C_2)$ (see Section~\ref{Sec:KnownStuff} for a precise definition).  We define
\[
Y_a := \{ f_1^* [D_1] + f_2^* [D_2] \mid ([D_1], [D_2]) \in Z_a \} .
\]
In Section~\ref{Sec:Bielliptic}, we survey the literature on this subject and prove the following.

\begin{proposition}
\label{Prop:BiellipticGenus1}
Let $[f \colon \WC \to C] \in \mathcal{R}_{B,g,0}$.  If $r \geq 2$, then $V^r (C,f) = Y_{r+1}$.
As a consequence, $V^r (C,f)$ has dimension $g-r-2$ when $g-r-2 \geq 0$ and is empty otherwise.  If $g>r+2$, then $V^r (C,f)$ is irreducible, and if $g=r+2$, then $V^r (C,f)$ consists of two points.
\end{proposition}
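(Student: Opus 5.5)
The plan is to use Debarre's analysis of Prym varieties of bielliptic covers, as recalled in Section~\ref{Sec:KnownStuff}, together with the structure of the Galois $\Z/2\Z\times\Z/2\Z$ cover when $t=0$. In this case $C_1$ has genus $1$ and $C_2$ has genus $g-1$. The Prym variety $P(C,f)$ is then isogenous to $\Pic^0(C_2)$, or more precisely to $P(C_2/E)$ modulo the contribution of $C_1$. Prym divisors therefore come from pullbacks $f_1^*[D_1]+f_2^*[D_2]$ with norm condition $\mathrm{Nm}(D)=K_C$. Debarre's result, as I recall it, is that for $[f]\in\mathcal{R}_{B,g,t}$ the Prym-Brill-Noether loci $V^r(C,f)$ contain $Y_{r+1}$ as a union of components, with equality outside small $t$. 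I will take as given the description of $Z_a$ and the fact that $\rk(f_1^*D_1+f_2^*D_2)\geq a-1$ for $(D_1,D_2)\in Z_a$. This gives $Y_{r+1}\subseteq V^r(C,f)$.

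For the reverse inclusion, take $[D]\in V^r(C,f)$ with $r\geq 2$. Consider $h\circ f\colon \WC\to E$ and the sections $H^0(\WC,D)$, which carry an action of the Galois group on the Prym-canonical data. Since $D$ has rank at least $2$, I would argue as in Debarre/Naranjo via a Castelnuovo--Severi / base-point-free pencil style argument. The linear system $|D|$ on $\WC$ with $\deg D = 2g-2$ and $h^0\geq 3$ cannot induce a birational map compatible with the involution. It must therefore factor through one of the quotients $C_1$ or $C_2$, or decompose as a sum of pullbacks. Concretely, I would show that the moving part of $|D|$ is $f_1^*$ of a divisor on the elliptic curve $C_1$ of degree $a=r+1$, plus a fixed part that is $f_2^*D_2$. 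The norm condition $\mathrm{Nm}_f D = K_C$ then pins down $(D_1,D_2)\in Z_{r+1}$. This identification step is the main obstacle. I expect to handle it by invoking the relevant statement in Section~\ref{Sec:KnownStuff} (Debarre's characterization of singular points of the theta divisor / components of $V^r$ for bielliptic Pryms), checking that its hypotheses hold for $t=0$ and $r\geq 2$.

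Given $V^r=Y_{r+1}$, the dimension count follows by parametrizing $Z_{r+1}$. A degree-$(r+1)$ class $D_1$ on the genus-one curve $C_1$ contributes a one-dimensional choice, minus the norm constraint. The class $D_2$ on $C_2$ then has degree $g-2-(r+1)+\dots$, constrained by a norm condition to $E$. This leaves an effective-divisor parameter space of dimension $g-r-2$, namely a symmetric product or a fiber of an Abel--Jacobi-type map to $E$. This space is empty when $g-r-2<0$, irreducible when positive (as a fiber of $C_2^{(n)}\to\Pic(E)$ with $n$ large enough, or as an image of one), and finite when $g=r+2$. In the last case, I expect to count the two points directly: these are the two choices of $D_1$ (or of parity) compatible with the norm condition and the parity defining $P^r$. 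The map $(D_1,D_2)\mapsto f_1^*D_1+f_2^*D_2$ is finite, so dimensions are preserved.
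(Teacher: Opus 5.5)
\textbf{The reverse inclusion is not proved.} Your inclusion $Y_{r+1}\subseteq V^r(C,f)$ is fine: it is the easy direction, via Riemann--Roch on $C_1$ plus multiplication by a section of $D_2$, together with Debarre's parity statement. The gap is the reverse inclusion $V^r(C,f)\subseteq Y_{r+1}$, which is the real content of the proposition. The Castelnuovo--Severi heuristic gives no mechanism forcing $|D|$ to be pulled back from $C_1$, and you defer the identification step to an unspecified result.

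The target you aim for is also wrong as stated. If $h^0(\WC,D)=r+3$, the moving part is $f_1^*$ of a class of degree $r+3$, not $r+1$. So even a successful identification would still need a containment between the $Y_a$.

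The missing argument is as follows.
\begin{enumerate}
\item By \cite[Proposition~5.2.1]{Debarre} or \cite[Corollary~5.7]{Horing}, $[D]\in Y_a$ for some $a\equiv r+1 \pmod 2$. Take $a$ \emph{maximal}, with $([D_1],[D_2])\in Z_a$ representing $[D]$.
\item If $D_2\sim p_2^*x+A$ with $A\ge 0$, then $f_2^*p_2^*x=f_1^*p_1^*x$ moves $[D]$ into $Y_{a+2}$, contradicting maximality. So $D_2$ is $p_2$-simple.
\item Lemma~\ref{Lem:Simple} together with Riemann--Roch on the elliptic curve $C_1$ (where $\deg D_1=a$) gives $h^0(\WC,D)\le 2h^0(C_1,D_1)-a$. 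This is at most $2$ if $a=0$ and equals $a$ if $a\ge 1$. Since $h^0(D)\ge r+1\ge 3$, it forces $a\ge r+1$.
\item Finally, $Y_{a+2}\subseteq Y_a$ for $a\ge1$. A class of degree $\ge 3$ on $C_1$ is never $p_1$-simple, so you can transfer some $p_1^*x$ from $D_1$ to $D_2$. Iterating lands $[D]$ in $Y_{r+1}$.
\end{enumerate}

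\textbf{Errors in the consequences.}
\begin{itemize}
\item Since $g(C_1)+g(C_2)=g+1$, you have $g(C_2)=g$, not $g-1$. Likewise $\mathrm{Prym}(C,f)$, of dimension $g-1$, is isogenous to $P(C_2/E)$, not to $\Pic^0(C_2)$. With the correct genus, $\deg D_2=g-2-r$, and your ``$+\dots$'' disappears.
\item Every class of degree $r+1\ge1$ on $C_1$ is effective and $\mathrm{Nm}_{p_1}$ is $2$-to-$1$. So $Z_{r+1}$ is a double cover of $W^0_{g-2-r}(C_2)$, not a fiber of an Abel--Jacobi map. Its irreducibility is the nontrivial connectedness statement of \cite[Lemma~5.3]{Horing}, with $Z_{g-2}\cong\WC$ in the boundary case.
\item The map $Z_{r+1}\to Y_{r+1}$ is not finite. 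If $D_2=p_2^*y+D_2''$, then $(D_1+p_1^*y-p_1^*y',\,p_2^*y'+D_2'')$ for $y'\in E$ is a curve in $Z_{r+1}$ with constant image. The map is only generically injective, and proving that again uses Lemma~\ref{Lem:Simple}: for general $([D_1],[D_2])$, $D_2$ is $p_2$-simple, so $h^0(\WC,D)\le h^0(C_1,D_1)$, all sections come from $C_1$, and $[D_1],[D_2]$ are recovered from $[D]$.
\item For $g=r+2$, the two points are the two solutions of $\mathrm{Nm}_{p_1}D_1\sim\delta$ with $D_2=0$; parity plays no role. You still need that they remain distinct in $\Pic(\WC)$, for instance because $f_1$ is ramified and hence $f_1^*$ is injective.
\end{itemize}
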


In Section~\ref{Sec:LOL}, we turn to tropical geometry.  After briefly surveying the basics of divisor theory on tropical curves, we then discuss the Prym-Brill-Noether theory of the loop of loops, as described in \cite{PBN}.  The main result of this section is that the Prym-Brill-Noether variety of a bielliptic loop of loops has the combinatorial structure of a certain polytope called a \emph{zonotope}.

 \begin{theorem}
\label{Thm:PrymZonotope}
Let $\Gamma$ be a bielliptic loop of loops and $\pi \colon \WG \to \Gamma$ the quotient by the antipodal involution.  If $r \geq 2$, then $V^r (\Gamma,\pi)$ is a union of tori, and the intersection lattice of these tori is the face lattice of a zonotope. 

In the special case where $r=2$, the statement holds for every loop of loops $\Gamma$, regardless of edge lengths.
\end{theorem}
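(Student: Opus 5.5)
The plan is to use the combinatorial description of $V^r(\Gamma,\pi)$ from \cite{PBN} and translate it into the language of zonotopes. Recall from \cite{PBN} that $\Gamma$ consists of $g-1$ loops $\gamma_1,\dots,\gamma_{g-1}$ attached to a central loop. The cover $\WG$ is obtained by doubling the central loop and each $\gamma_i$. A divisor class in $V^r(\Gamma,\pi)$ is encoded by a tableau-like datum: an assignment to each loop of a "lingering" or "moving" behavior, subject to a rank condition.

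In the bielliptic case each $\gamma_i$ has edge lengths making its lift a pair of loops mapping with the same torsion order (order $2$). Under this constraint, the combinatorial types contributing to $V^r(\Gamma,\pi)$ should be exactly the subsets $S\subseteq\{1,\dots,g-1\}$ of size $g-r-2$, at least for the maximal cells. The loops in $S$ are those on which the divisor moves freely, so each cell is a torus $T_S \cong (S^1)^{|S|}$, together with a discrete choice on the remaining loops. I would first make this explicit: list the torus $T_\sigma$ attached to each admissible datum $\sigma$.

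The first step is to verify that each cell of $V^r(\Gamma,\pi)$ is a closed torus rather than an open cell or a torus with boundary. The point is that when the free chip on a loop $\gamma_i$ reaches the special position permitted by the bielliptic torsion, the divisor simply continues around the loop instead of degenerating into a new type. So every cell closes up into a full circle in each free coordinate. This shows $V^r(\Gamma,\pi)=\bigcup_\sigma T_\sigma$.

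The second step is to compute intersections. I expect $T_\sigma\cap T_\tau$ to be either empty or a torus $T_\rho$ of the same form: the free loops of $\rho$ are common to both, and the fixed loops carry the positions forced by both. I would then identify the poset of these tori, ordered by inclusion, with the face lattice of the zonotope obtained as a Minkowski sum of segments. The natural guess is the permutohedron-like zonotope $Z=\sum_{i<j}[e_i,e_j]$, or a hypersimplex-type sum, in dimension $g-r-2$ (matching the dimension in \cref{Prop:BiellipticGenus1}). Faces of a zonotope correspond to covectors, that is, sign vectors $\{+,-,0\}^{n}$ realized by the hyperplane arrangement dual to the generating vectors. So the concrete task is to biject tori with covectors: a $0$ marks a free loop, and $+$ or $-$ records which of the two discrete positions, or which sheet of $\WG$, a fixed loop uses. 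Inclusion of tori should correspond to the covector order, in which zeros are obtained by relaxing signs.

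The main obstacle is determining exactly which sign vectors occur, that is, which arrangement is realized. This requires carefully translating the Prym rank condition from \cite{PBN}, namely the requirement that the rank be at least $r$ via the lingering-lattice-path count, into conditions on signs. I would first work out the cases $r=2$ and small $g$ by hand, and then guess the arrangement from these examples. When $r=2$ the torsion condition on each loop is never invoked, because there is at most one special position per loop. I would show this directly to obtain the final claim for arbitrary edge lengths.
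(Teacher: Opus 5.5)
There is a genuine gap, and it sits exactly at the step you call the ``main obstacle'': you never determine which oriented matroid (equivalently, which zonotope) governs the tori, and guessing it from small cases is not a proof. Your first two steps are available as citations. Theorem~\ref{Thm:PBNBielliptic} (\cite[Theorem~6.8]{PBN}) says the tori are the $T(Q)$ for lingering words $Q$ of length $g-1$ that contain an alternating subword of length $r+1$ in $r_0,r_1$. It also covers $r=2$ for arbitrary edge lengths, so your vague ``at most one special position per loop'' argument is not needed. Lemma~\ref{Lem:Contains} says inclusions and intersections of these tori are governed by the poset $P(r+1,g-1)$. So the whole content of the theorem is that $P(r+1,g-1)$ is a zonotope face lattice, and your proposal has no argument for this.

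Your sign-vector framework is right; what is missing is the identification of the matroid. Read $r_0,r_1,\epsilon$ as $+,-,0$. Then the elements of $P(r+1,n)$ are exactly the nonzero sign vectors on $[n]$ that conform to an alternating sign pattern on some $(r+1)$-subset. These are precisely the vectors of the \emph{alternating oriented matroid}, whose signed circuits are all $(r+1)$-subsets with alternating signs. The argument then runs as follows:
\begin{enumerate}
\item The alternating matroid is realizable (by points on the moment curve), hence so is its dual, say by $\vec{x}_1,\ldots,\vec{x}_n$.
\item The vectors of a matroid are the covectors of its dual.
\item By Proposition~\ref{Prop:ZonotopeMatroid}, the covectors of the dual are in order-reversing bijection with the faces of $Z(\vec{x}_1,\ldots,\vec{x}_n)$.
\item The map from vectors to $P(r+1,n)$ is itself order-reversing, so composing the two reversals gives the result.
\end{enumerate}

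Your concrete guesses also show the small-case route would likely mislead you. The zonotope has exactly $g-1$ zones, one generator per loop $\gamma_i$, namely that loop's sign coordinate. So it is neither the permutohedron $\sum_{i<j}[e_i,e_j]$, which has $\binom{m}{2}$ zones, nor a hypersimplex-type sum. For example, when $g=6$ and $r=2$ it is the rhombic icosahedron, which has $5$ zones. Your dimension count is also off by one. Maximal tori have dimension $g-r-2$ and correspond to facets, so the zonotope has dimension $g-1-r$, and the tori correspond to its proper faces.
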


Finally, in Section~\ref{Sec:Lifting}, we prove Theorem~\ref{Thm:Lifting}.  The approach is very similar to the proof of \cite[Theorem~1.2]{PBN}, which itself borrows heavily from \cite{CJP}.  We let $\mathcal{X}$ be the intersection of $V^r (C,f)$ with $g-2-r$ general translates of the theta divisor, and let $X$ be, analogously, the intersection of $V^r (\Gamma,\pi)$ with $g-2-r$ general translates of the tropical theta divisor.  We use the combinatorics described in Section~\ref{Sec:LOL} to prove that $X$ is a finite local complete intersection and compute $\# X$.  We then use Rabinoff's lifting theorem \cite{Rabinoff12} to show that the restriction of the tropicalization map to $\mathcal{X}$ is injective, and by comparing the size of the domain and target, conclude that it must be surjective as well.  While the general outline is the same, we note one minor difference between our proof and those appearing earlier in \cite{CJP, PBN}.  Both of the earlier papers use the known cohomology class of the (Prym)-Brill-Noether variety to compute the number of points in $\mathcal{X}$, but in principle this is unnecessary.  Indeed, it is clear from this proof outline that one only needs $\# \mathcal{X} \geq \# X$.  In this way, rather than using the class of $V^r (C,f)$ to prove the lifting result, we instead use Theorem~\ref{Thm:Lifting} to compute the class.

\begin{corollary}
\label{Cor:Class}
Let $r \geq 2$.  For a general \'{e}tale double cover $[f \colon C] \to \WC \in \mathcal{R}_{B,g,0}$, one has the following equality of cohomology classes:
\[
[ V^r (C,f) ] = 2 \cdot \frac{\Theta^{r+1}}{(r+1)!}.
\]
\end{corollary}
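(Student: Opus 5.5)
Since $V^r(C,f)$ has the expected codimension $r+1$ in the Prym variety by Proposition~\ref{Prop:BiellipticGenus1}, its class is $c\cdot \Theta^{r+1}/(r+1)!$ for some rational $c$ that we must show equals $2$. It suffices to compute the number of points in the intersection $\mathcal{X}$ of $V^r(C,f)$ with $g-2-r$ general translates of the theta divisor. We have $\#\mathcal{X} = [V^r(C,f)]\cdot \Theta^{g-2-r}$, and on a principally polarized abelian variety of dimension $g-1$ one has $\Theta^{g-1} = (g-1)!$, so
\[
\#\mathcal{X} = c\cdot \frac{\Theta^{g-1}}{(r+1)!} = c\cdot \frac{(g-1)!}{(r+1)!}.
\]
The case $g-2-r<0$ is vacuous since both sides vanish. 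Note also that the class of $V^r(C,f)$ is locally constant in the family $\mathcal{R}_{B,g,0}$, so it suffices to compute it for one cover. Choose a cover over a nonarchimedean field that specializes to the double cover $\pi$ of a bielliptic loop of loops, which can be done by the usual smoothing/deformation arguments. Generality is needed so that $\mathcal{X}$ is reduced, i.e.\ the intersection is transverse, as in the reducedness statement from the abstract.

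Next, by the argument in the proof of Theorem~\ref{Thm:Lifting}, tropicalization restricted to $\mathcal{X}$ is a bijection onto the tropical intersection $X$. This bijection respects multiplicities, since $X$ is a finite local complete intersection and each point has tropical multiplicity one by Rabinoff's theorem. Hence $\#\mathcal{X} = \#X$, and we only need the combinatorial count of $X$.

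The main step is computing $\#X$ using Theorem~\ref{Thm:PrymZonotope}. The maximal tori of $V^r(\Gamma,\pi)$ correspond to the top-dimensional cells of a zonotope, of dimension $g-2-r$. For each such torus $T$, the number of intersection points of $T$ with $g-2-r$ generic translates of the tropical theta divisor equals the mixed volume, namely the degree of $\Theta^{g-2-r}$ restricted to $T$. Summing over tori, I would expect the total count to reduce to
\[
2\cdot\frac{(g-1)!}{(r+1)!}.
\]
This parallels the loop-of-loops count in \cite{PBN}, where the corresponding count is $2^{\binom{r}{2}}$ times a multinomial; in the bielliptic case the zonotope structure should replace the tableau count by a factor of $2$, reflecting the two components indexed by $Z_{r+1}$. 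Comparing the two expressions for $\#\mathcal{X}$ then gives $c=2$.

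I expect this volume computation to be the obstacle. Each torus is a subtorus of the tropical Prym, and one must compute the degree of $\Theta^{g-2-r}$ on it from the lattice generated by its edge directions. One then shows that these local contributions sum over the zonotope's top cells to twice the normalized volume $(g-1)!/(r+1)!$, perhaps via the standard fact that a zonotope tiles into parallelepipeds whose volumes add.
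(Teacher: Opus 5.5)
Your overall strategy (compare $[V^r(C,f)]\cdot\Theta^{g-2-r}$ with $\#\mathcal{X}=\#X$ from the lifting argument) is the paper's. However, your first step is unjustified, and it is exactly the nontrivial input.

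Having codimension $r+1$ does not make $[V^r(C,f)]$ proportional to $\Theta^{r+1}$. Moreover, $r+1$ is not the expected codimension: for Prym--Brill--Noether loci the expected codimension is $\binom{r+1}{2}$, which exceeds $r+1$ as soon as $r\ge 3$. So $V^r(C,f)$ has excess dimension, and no degeneracy-locus formula applies.

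The number $[V^r(C,f)]\cdot\Theta^{g-2-r}$ is only one linear functional of the class. Counting $\mathcal{X}$ therefore determines $[V^r(C,f)]$ only once you know the class lies on the line spanned by $\Theta^{r+1}$. A Prym of a cover in $\mathcal{R}_{B,g,0}$ is not a priori free of other algebraic classes; the family even degenerates to products $\mathrm{Prym}(D,f)\times\Jac(E)$, as in the proof of Lemma~\ref{Lem:Intersection}.

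The paper obtains $[V^r(C,f)]=m_r\,\Theta^{r+1}/(r+1)!$ with $m_r\in\Z$ from Lemma~\ref{Lem:Intersection}. It does so by degenerating to a general hyperelliptic Jacobian, where every algebraic class is an integer multiple of the minimal one.

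You are in fact already relying on that lemma. The argument in the proof of Theorem~\ref{Thm:Lifting} gives only injectivity of $\Trop$ on $\mathcal{X}$ (via Rabinoff). Surjectivity comes from the bound $\#\mathcal{X}\ge 2\cdot\frac{(g-1)!}{(r+1)!}$, which is the second half of Lemma~\ref{Lem:Intersection} (i.e.\ $m_r\ge 2$). Citing that lemma for both facts closes the gap. Your comparison $m_r\frac{(g-1)!}{(r+1)!}=\#\mathcal{X}=\#X=2\frac{(g-1)!}{(r+1)!}$, with $\mathcal{X}$ reduced, is then precisely the paper's proof.

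The step you flag as the main obstacle is not a volume computation, and it is already done in Lemma~\ref{Lem:TropIntersection} by a direct count:
\begin{enumerate}
\item The maximal tori are the coordinate tori $T(Q)$ with exactly $g-2-r$ letters $\epsilon$. There are $2\binom{g-1}{r+1}$ of them: choose the positions of the non-$\epsilon$ letters, then one of the two alternating fillings.
\item $V^1(\Gamma,\pi)$ is a union of translates of the $g-1$ codimension-one coordinate tori. Hence each maximal torus meets $g-2-r$ general translates in exactly $(g-2-r)!$ points.
\end{enumerate}
No tiling of the zonotope enters. The factor $2$ reflects the two reduced words of maximal length in $I_2(r+1)$, not two components of $Z_{r+1}$, which is irreducible for $r+1\le g-2$ by Lemma~\ref{Lem:Irreducible}.
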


\subsection*{Acknowledgements}

This research was conducted as a project with the University of Kentucky Math Lab, supported by NSF grant DMS–2451533, as well as the Simons Travel Support for Mathematicians program.  We thank Philip Engel for suggesting the degeneration argument in the proof of Lemma~\ref{Lem:Intersection}.

\section{Prym Varieties of Bielliptic Curves}
\label{Sec:Bielliptic}

\subsection{Summary of Known Results}
\label{Sec:KnownStuff}

In this section, we summarize \cite{Debarre} on Prym varieties of bielliptic curves.  Let $C$ be a curve of genus $g$ and let $p \colon C \to E$ be a degree 2 map to a curve of genus 1.  A curve $C$ that admits such a map is called a \emph{biellitpic} curve.  Let $f \colon \WC \to C$ be an \'{e}tale double cover of $C$.  The Galois group of $\WC$ over $E$ is either $\Z/2\Z$ or $\Z/2\Z \times \Z/2\Z$.  In the latter case, we let $\iota$ be the involution that exchanges the two sheets of the map $f$, and let $\iota_1 , \iota_2$ denote the other non-identity elements of the Galois group.  Letting $C_i = \WC / \langle \iota_i \rangle$, we obtain the following commutative diagram:
\begin{equation}
\label{eq:classical}
\xymatrix {
& \WC \ar[ld]_{f_1} \ar[d]^{f} \ar[rd]^{f_2} & \\
C_1 \ar[rd]_{p_1} & C \ar[d]^{p} & C_2 \ar[ld]^{p_2} \\
& E & 
 }
\end{equation}
One has $g(C_1) + g(C_2) = g+1$.  We assume throughout without loss of generality that $g(C_1) \leq g(C_2)$.

We denote by $\mathcal{R}_{B,g}$ the space of \'{e}tale double covers $f \colon \WC \to C$, where $C$ is a bielliptic curve of genus $g$.  Let $\mathcal{R}'_{B,g} \subset \mathcal{R}_{B,g}$ be the space of such covers with Galois group $\Z/2\Z$, and let $\mathcal{R}_{B,g,t} \subset \mathcal{R}_{B,g}$ be the space of such covers with Galois group $\Z/2\Z \times \Z/2\Z$, and where $g(C_1) = t+1$.  The moduli space $\mathcal{R}_{B,g}$ admits a decomposition into irreducible components:
\[
\mathcal{R}_{B,g} = \big( \bigcup_{t=0}^{\lceil \frac{g-1}{2} \rceil } \mathcal{R}_{B,g,t} \big) \cup \mathcal{R}'_{B,g}.
\]

Let $\Delta$ denote the branch divisor of the map $p \colon C \to E$ and let $P_1 , \ldots , P_{2g-2} \in C$ denote the ramification points.  There exists a divisor class $\delta$ on $E$ such that $p^* \delta \sim \sum P_i$.  In particular, this implies that $2\delta \sim \Delta$.  Debarre shows that the Prym-Brill-Noether theory of the cover $f \colon \WC \to C$ is deeply connected to the variety:
\[
Z_a := \{ ([D_1], [D_2]) \in W^0_{g(C_1)-1+a} (C_1) \times W^0_{g(C_2)-1-a} (C_2) \mid \mathrm{Nm}_{p_1} (D_1) + \mathrm{Nm}_{p_2} (D_2) \sim \delta \} .
\]
We let $Y_a$ denote the image of $Z_a$ in $\Pic (\WC)$.  That is,
\[
Y_a := \{ f_1^* [D_1] + f_2^* [D_2] \mid ([D_1], [D_2]) \in Z_a \} .
\]
Debarre proves that $Y_a \subseteq P^r (C,f)$ if and only if $a \equiv r+1 \Mod{2}$.  Moreover, for $r \geq 1$, he proves that
\[
V^r (C,f) \subseteq \bigcup_{\substack{a=1-g(C_1) \\ a \equiv r+1 \Mod{2}}}^{g(C_2)-1} Y_a .
\]

We will use the following key observation.

\begin{lemma}
\label{Lem:Irreducible}
If $g \geq 3$, $a \leq g-2$, and $Z_a$ is nonempty, then $Z_a$ is irreducible of dimension $g-1-a$.  When $g(C_1) = 1$, the variety $Z_{g-1}$ consists of two points.
\end{lemma}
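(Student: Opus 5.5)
The plan is to realize $Z_a$ as a fibration over a Brill--Noether locus of $C_2$, with fibers that are translates of the kernel of a norm map, and read off irreducibility and dimension from that structure. Write $g_i = g(C_i)$, $d_1 = g_1 - 1 + a$ and $d_2 = g_2 - 1 - a$, so that $g_1 + g_2 = g+1$.

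I will work in the regime $a \geq 1$, which is the only case used later ($a = r+1$ in Proposition~\ref{Prop:BiellipticGenus1}). There $d_1 \geq g_1$, so $W^0_{d_1}(C_1) = \Pic^{d_1}(C_1)$. For $a \le 0$ neither factor need be all of $\Pic$, and the same count gives $g-2$ instead. I will also say where the hypotheses $g \geq 3$ and $a \leq g-2$ enter.

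\textbf{Step 1 (the fibration).} Consider the projection $\phi \colon Z_a \to W^0_{d_2}(C_2)$. The target is nonempty only if $d_2 \ge 0$. It is irreducible of dimension $d_2$, being the image of $\mathrm{Sym}^{d_2} C_2$, and $d_2 \le g_2$ since $a \geq 1$.

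Fix $[D_2]$ in the target. Since every class of degree $d_1 \geq g_1$ is effective, the fiber is
\[
\{ [D_1] \in \Pic^{d_1}(C_1) \mid \mathrm{Nm}_{p_1}[D_1] = \delta - \mathrm{Nm}_{p_2}[D_2] \}.
\]
The norm $\mathrm{Nm}_{p_1} \colon \Pic^{d_1}(C_1) \to \Pic^{d_1}(E)$ is surjective. Hence this fiber is a torsor under $\ker(\mathrm{Nm}_{p_1} \colon \Pic^0(C_1) \to \Pic^0(E))$, a group of dimension $g_1 - 1$. It follows that $\dim Z_a = d_2 + g_1 - 1 = g - 1 - a$ whenever $Z_a$ is nonempty.

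\textbf{Step 2 (irreducibility when $g_1 \ge 2$).} In this case $p_1$ is ramified. For a ramified double cover, $\mathrm{Nm}_{p_1}$ has connected kernel, because $p_1^*$ is injective on $\Pic^0$. So every fiber of $\phi$ is an abelian variety of dimension $g_1-1$.

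Globally, $Z_a$ is the fiber product of the surjective homomorphism-torsor $\Pic^{d_1}(C_1) \to \Pic^{d_1}(E)$ with the map $W^0_{d_2}(C_2) \to \Pic(E)$, $D_2 \mapsto \delta - \mathrm{Nm}\,D_2$. The first map is smooth with irreducible fibers and the base is irreducible, so $Z_a$ is irreducible.

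\textbf{Step 3 (the case $g_1 = 1$, the main obstacle).} Now $p_1 \colon C_1 \to E$ is an étale isogeny of degree $2$. Then $\mathrm{Nm}_{p_1}$ on $\Pic^0$ is the dual isogeny, whose kernel is $\Z/2\Z$. So $\phi$ is a finite étale double cover of $W^0_{d_2}(C_2)$, namely the pullback of the connected étale double cover $\Pic^{d_1}(C_1) \to \Pic^{d_1}(E)$.

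\emph{Case $a = g-1$.} Then $d_2 = 0$, so $D_2 = 0$, and $Z_{g-1}$ is a single fiber consisting of two points, as claimed.

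\emph{Case $a \le g-2$.} Then $d_2 \ge 1$, and I must show that the pulled-back double cover is connected. Because $g \ge 3$, we have $g_2 = g \ge 2$, so $p_2$ is ramified and $\mathrm{Nm}_{p_2} \colon \Pic^0(C_2) \to \Pic^0(E)$ is surjective with connected kernel. I will then argue on fundamental groups. The composite
\[
\mathrm{Sym}^{d_2} C_2 \to \Pic^{d_2}(C_2) \to \Pic(E)
\]
induces a surjection on $H_1$: the first map is surjective on $H_1$ for $d_2 \ge 1$, and the second is surjective on $H_1$ because its kernel is connected. Therefore the nontrivial double cover of $E$ pulls back to a connected, hence irreducible, double cover of $\mathrm{Sym}^{d_2}C_2$. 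Its image in $Z_a$ is consequently irreducible of dimension $d_2 = g-1-a$.
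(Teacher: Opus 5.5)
Your proof is correct for $a\ge 1$, and it takes a genuinely different route from the paper. The paper's proof is mostly a citation. It invokes H\"oring's Lemma~5.3 for every case except $g(C_1)=1$, $a\ge g-2$. It treats $a=g-1$ via the order-two kernel of $\mathrm{Nm}_{p_1}$, exactly as you do. For $a=g-2$ it writes down the explicit map $k\colon\WC\to Z_{g-2}$, $x\mapsto(\delta'-\mathrm{Nm}_{f_1}(x),\mathrm{Nm}_{f_2}(x))$, and shows it is an isomorphism by comparing two \'etale double covers of $C_2$.

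You instead organize everything around the projection $Z_a\to W^0_{d_2}(C_2)$, whose fibers are torsors under $\ker\mathrm{Nm}_{p_1}$. This gives the dimension immediately. It gives irreducibility for $g(C_1)\ge 2$ from connectedness of that kernel. For $g(C_1)=1$ it reduces irreducibility to connectedness of a pulled-back \'etale double cover, which your $H_1$-surjectivity argument settles uniformly for all $1\le a\le g-2$. So you reprove the part the paper outsources. What the paper's special argument buys is the sharper identification $Z_{g-2}\cong\WC$, which Example~\ref{Ex:LowDim} relies on. Your method only shows $Z_{g-2}$ is some connected \'etale double cover of $C_2$. It is in fact the one whose class is $p_2^*$ of the class of $p_1$, namely $f_2$, but that needs a sentence.

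Your restriction to $a\ge 1$ is a necessary correction, not a weakness: the statement as written fails for $a\le 0$. One gets $\dim Z_0=g-2$, and for $a<0$ your fibration with $C_1,C_2$ exchanged gives $\dim Z_a=g-1+a$, not $g-2$ as your parenthetical says.

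Finally, the ground field may have odd positive characteristic, so Step~3 is more robust phrased with line bundles than with topological $H_1$. The cover corresponds to a nonzero $\eta\in\Pic^0(E)[2]$. Its pullback along $D_2\mapsto\delta-\mathrm{Nm}_{p_2}D_2$ is $p_2^*\eta$, which is nonzero because $p_2$ is ramified. It stays nonzero on $\mathrm{Sym}^{d_2}C_2$ by Abel--Jacobi.
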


\begin{proof}
This is shown in \cite[Lemma~5.3]{Horing} in all cases except when $g(C_1) = 1$ and $a \geq g-2$.  When $g(C_1) = 1$, the map $p_1 \colon C_1 \to E$ is an isogeny of degree 2, hence $\mathrm{Nm}_{p_1}$ has kernel of order 2.  It follows immediately that
\[
Z_{g-1} = \{ ([D_1], 0) \mid \mathrm{Nm}_{p_1} (D_1) \sim \delta \}
\]
consists of 2 points.

We now show that, if $g(C_1)=1$, then $Z_{g-2}$ is isomorphic to $\WC$.  Since $\WC$ is an irreducible curve, the result will follow.  Let $\delta' \in \mathrm{Nm}_{p_1}^{-1} (\delta)$.  Consider the map $k \colon \WC \to W^0_{g-2} (C_1) \times W^0_1 (C_2)$ given by
\[
k(x) = ( \delta' - \mathrm{Nm}_{f_1} (x), \mathrm{Nm}_{f_2} (x)) .
\]
Note that, since $g \geq 3$, we have $g-2 \geq 1$, and since $g(C_1)=1$, every divisor class of degree $g-2$ on $C_1$ is effective.  Hence, the image of $k$ is contained in $Z_{g-2}$.  Moreover, one has the commutative diagram:
\[
\xymatrix {
\WC \ar[r]^{k} \ar[rd]_{f_1} & Z_{g-2} \ar[d] \\
& C_2 
 }
\]
where the left arrow is an \'{e}tale double cover by construction and the right arrow is an \'{e}tale double cover because $\mathrm{Nm}_{p_1}$ has kernel of order 2.  It follows that $k$ must be an isomorphism.
\end{proof}

We will prove a similar statement about the image $Y_a$ in Corollary~\ref{Cor:Irreducible} below.  Given a map of curves $F \colon Y \to X$, an effective divisor $D$ on $Y$ is $F$-\emph{simple} if there is no point $x \in X$ such that $F^* (x) \leq D$.  A divisor class $[D]$ is $F$-simple if $D'$ is simple for all $D' \in \vert D \vert$.  We will repeatedly use the following fact.

\begin{lemma}\cite[Corollary~5.2.8]{Debarre}
\label{Lem:Simple}
If $D_2$ is $p_2$-simple, then
\[
h^0 (\WC, f_1^* D_1 + f_2^* D_2) \leq 2h^0 (C_1, D_1) + g(C_1) - 1 - \deg (D_1) .
\]
\end{lemma}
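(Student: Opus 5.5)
The plan is to push everything down to the elliptic curve $E$ and turn the inequality into a statement about a rank-two bundle there. The key input is that, when $h\circ f$ is Galois with group $\Z/2\Z\times\Z/2\Z$, the diagram \eqref{eq:classical} is a fibre square, $\WC \cong C_1\times_E C_2$. Since $p_1,p_2$ are finite and flat, flat base change gives
\[
f_{2*}f_1^*\mathcal{O}_{C_1}(D_1)\cong p_2^*p_{1*}\mathcal{O}_{C_1}(D_1).
\]
Set $F:=p_{1*}\mathcal{O}_{C_1}(D_1)$. This is a rank-two bundle on $E$. Because $p_1$ has $2g(C_1)-2$ branch points, Riemann--Hurwitz gives $\deg F=\deg D_1-(g(C_1)-1)$. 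The projection formula along $f_2$ then yields
\[
h^0(\WC, f_1^*D_1+f_2^*D_2)=h^0\bigl(C_2,\mathcal{O}(D_2)\otimes p_2^*F\bigr).
\]
We also have $h^0(C_1,D_1)=h^0(E,F)$, and Riemann--Roch on $E$ gives $h^0(F)-h^1(F)=\deg F$. So the right-hand side of the lemma equals $h^0(E,F)+h^1(E,F)$, and the claim becomes
\[
h^0\bigl(C_2,D_2\otimes p_2^*F\bigr)\le h^0(E,F)+h^1(E,F)\qquad(\ast)
\]
for every rank-two bundle $F$ on $E$ and every $p_2$-simple $D_2$.

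To prove $(\ast)$, I would first reduce to line bundles. Choose a line subbundle $M\subset F$ of maximal degree, with quotient $N=F/M$, so $0\to M\to F\to N\to 0$. Pulling back to $C_2$ and twisting by $D_2$, the left side of $(\ast)$ is at most $h^0(D_2+p_2^*M)+h^0(D_2+p_2^*N)$. By Atiyah's classification, with $M$ of maximal degree the extension either splits or $F$ is indecomposable, and in either case $h^0(F)+h^1(F)$ is computable in terms of $M$ and $N$ (up to a correction when $M\cong N$). So it suffices to prove the rank-one inequality
\[
h^0(C_2,D_2+p_2^*L)\le h^0(E,L)+h^1(E,L)
\]
for a line bundle $L$ on $E$, and then handle the non-split indecomposable case, where the bound from the two pieces is off by one, separately.

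For the rank-one inequality I would use $p_2$-simplicity as follows. Adding $p_2^*x$ to a divisor increases $h^0$ by at most $2$. Simplicity says $h^0(D_2-p_2^*x)=0$ for every $x\in E$, which anchors an induction on $\deg L$: descending from $L$ to $L(-x)$ should lose at least as much on the left as $h^0+h^1$ loses on the right. Alternatively, one could use the base-point-free pencil trick with the multiplication map $H^0(C_2,D_2)\otimes H^0(E,L)\to H^0(C_2,D_2+p_2^*L)$, or argue via $p_{2*}\mathcal{O}(D_2)$ and show that simplicity forbids it from having a line subbundle of large degree.

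The main obstacle is the degree-zero range, in particular $L\in\Pic^0(E)$ nontrivial. There the right side of the rank-one inequality is $0$, yet a divisor $D_2+p_2^*y-p_2^*x$ could a priori be effective. I expect the fix is that one should not bound the two graded pieces separately. Instead one should use the maximal-degree choice of $M$ (or, in the indecomposable case, the Atiyah bundle structure) together with simplicity applied to all of $|D_2|$ at once. This is where I would spend most of the effort, following Debarre's argument in \cite{Debarre} as closely as possible.
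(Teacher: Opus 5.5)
\textbf{There is a genuine gap.} The inequality $(\ast)$ you reduce to is false. So is the rank-one inequality you hope to derive it from, and not only in degree zero.

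Take $D_2=0$, which is $p_2$-simple because $|0|=\{0\}$. Take $L=\mathcal{O}_E(n\,o)$ for a point $o\in E$ and $n\ge g(C_2)$. Then $h^0(C_2,p_2^*L)=2n+1-g(C_2)>n=h^0(L)+h^1(L)$. With $F=L^{\oplus 2}$ this breaks $(\ast)$. Your induction fails for the same reason: adding $p_2^*x$ can raise the left side by $2$, while $h^0+h^1$ rises by only $1$.

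In fact your reformulation is faithful to the lemma read literally, and the literal statement is false. With $D_2=0$ and $\deg D_1\ge 2g-1$, one gets $h^0(\WC,f_1^*D_1)=2\deg D_1+2-2g$, which exceeds the right-hand side $\deg D_1+1-g(C_1)$. The paper quotes this from Debarre without proof and only ever applies it to $([D_1],[D_2])\in Z_a$. Two hypotheses from $Z_a$ are essential: that $D_2$ is effective, and that $\mathrm{Nm}_{p_1}(D_1)+\mathrm{Nm}_{p_2}(D_2)\sim\delta$. Your argument discards both, so no analysis of the graded pieces of $F$ can close the gap.

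\textbf{Suggested repair.} Your setup ($\WC\cong C_1\times_E C_2$, base change, the rewriting of the right side as $h^0(F)+h^1(F)$) is fine. The missing idea is to push $D_2$ down as well and filter $G:=p_{2*}\mathcal{O}_{C_2}(D_2)$ rather than $F$.

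First, $h^0(\WC,f_1^*D_1+f_2^*D_2)=h^0(E,F\otimes G)$. Next, a section of $\mathcal{O}(D_2)$ with divisor $D_2'$ vanishes at $x\in E$, viewed as a section of $G$, if and only if $D_2'\ge p_2^*x$. So effectivity together with $p_2$-simplicity gives a nowhere-vanishing section of $G$, hence an exact sequence $0\to\mathcal{O}_E\to G\to\det G\to 0$.

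Now write $p_{i*}\mathcal{O}_{C_i}\cong\mathcal{O}_E\oplus\delta_i^{-1}$. Then $\det F\cong\mathcal{O}_E(\mathrm{Nm}_{p_1}D_1)\otimes\delta_1^{-1}$, and similarly for $G$. Moreover $\delta\cong\delta_1\otimes\delta_2$. To see this, note that multiplication of eigensheaves in $(p\circ f)_*\mathcal{O}_{\WC}$ gives a nonzero map $\delta_1^{-1}\otimes\delta_2^{-1}\to\delta^{-1}$ (nonzero because $\WC$ is integral). Both sides have degree $-(g-1)$, so the map is an isomorphism. Hence the norm condition says exactly $\det G\cong(\det F)^{-1}$, and therefore
\[
h^0(F\otimes G)\le h^0(F)+h^0\bigl(F\otimes(\det F)^{-1}\bigr)=h^0(F)+h^0(F^\vee)=h^0(F)+h^1(F).
\]
By your Riemann--Roch computation, the last expression is the right-hand side of the lemma.
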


We now prove that the variety $Y_a$ is irreducible, except in one case, and of the expected dimension, except in one case.

\begin{corollary}
\label{Cor:Irreducible}
If $1 \leq a \leq g-2$ and $Y_a$ is nonempty, then it is irreducible of dimension $g-1-a$.  When $g(C_1) = 1$, $Y_{g-1}$ consists of two points.
\end{corollary}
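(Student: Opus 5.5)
The plan is to deduce everything from Lemma~\ref{Lem:Irreducible} by studying the map
\[
\phi_a \colon Z_a \to Y_a, \qquad ([D_1],[D_2]) \mapsto f_1^*[D_1] + f_2^*[D_2],
\]
which is surjective by definition of $Y_a$. Irreducibility is then immediate: the image of an irreducible variety is irreducible. The condition $a \leq g-2$ with $g \geq 3$ holds whenever $1 \leq a \leq g-2$, so Lemma~\ref{Lem:Irreducible} applies. The only thing left is the dimension, and for that I will show that $\phi_a$ is generically finite, or better, finite.

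The key step is to compute the fibers of $\phi_a$. Suppose $f_1^*D_1 + f_2^*D_2 \sim f_1^*D_1' + f_2^*D_2'$. I would use the standard structure of $\Pic(\WC)$ for the Klein four cover to control when $f_1^*\alpha + f_2^*\beta \sim 0$, where $\alpha = D_1 - D_1'$ and $\beta = D_2 - D_2'$ are of degree zero and satisfy $\mathrm{Nm}_{p_1}\alpha + \mathrm{Nm}_{p_2}\beta \sim 0$. Applying $\mathrm{Nm}_{f_1}$ and using $\mathrm{Nm}_{f_1} f_2^* = p_1^* \mathrm{Nm}_{p_2}$ gives
\[
2\alpha + p_1^*\mathrm{Nm}_{p_2}\beta \sim 0, \qquad \text{hence } 2\alpha \sim p_1^*\mathrm{Nm}_{p_1}\alpha .
\]
This says $\alpha - \iota'\alpha \sim 0$ up to torsion considerations, where $\iota'$ is the involution of $C_1$ over $E$. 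So $\alpha$ lies in $p_1^*\Pic^0(E)$ plus finitely many torsion classes, and symmetrically for $\beta$. Pullbacks from $E$ contribute the relation $f_1^*p_1^*e = f_2^*p_2^*e$, so a fiber could a priori contain the positive-dimensional family
\[
(D_1 + p_1^*e,\; D_2 - p_2^*e), \qquad e \in \Pic^0(E).
\]
The norm condition rules this family out: it forces $\mathrm{Nm}_{p_1}p_1^*e - \mathrm{Nm}_{p_2}p_2^*e = 2e - 2e = 0$, which holds automatically. So the constraint must instead come from effectivity. For general $D_1$ of degree $g(C_1)-1+a$, the translates $D_1 + p_1^*e$ are effective only for finitely many $e$ when $\deg D_1 < g(C_1)$. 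When the degree is large, I would instead exploit $D_2$, since $\deg D_2 = g(C_2)-1-a < g(C_2)$. Then $W^0_{\deg D_2}(C_2)$ is a proper subvariety whose generic point has only finitely many $p_2^*\Pic^0(E)$-translates inside it. I would handle this using the dimension count $\dim W^0 = \deg D_2$ together with the irreducibility of $Z_a$.

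I expect this fiber analysis to be the main obstacle. To keep it manageable, I would argue only at a general point of $Z_a$, which suffices for $\dim Y_a = \dim Z_a = g-1-a$. Lemma~\ref{Lem:Simple} could help if needed: at a general point $D_2$ is $p_2$-simple, so $h^0$ of the image is bounded. This shows the image divisor does not move in a family larger than that dictated by $Z_a$.

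For the case $g(C_1) = 1$ and $a = g-1$, Lemma~\ref{Lem:Irreducible} says $Z_{g-1}$ consists of two points $([D_1],0)$ with $\mathrm{Nm}_{p_1}D_1 \sim \delta$. These points differ by the nontrivial element $\eta$ of $\ker \mathrm{Nm}_{p_1}$. Their images are distinct provided $f_1^*\eta \neq 0$. I would check this using $\ker f_1^* = \{0,\eta_1\}$, where $\eta_1$ defines the étale cover $f_1$. The point is that $\eta_1$ is not in $\ker \mathrm{Nm}_{p_1}$, since the cover $\WC \to C_1$ is not pulled back from $E$; if it were, the Galois structure of the Klein four cover would be contradicted. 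This gives exactly two points in $Y_{g-1}$.
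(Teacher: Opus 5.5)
There is a genuine gap at the step you yourself flag as the main obstacle. Your reduction is sound. Applying $\mathrm{Nm}_{f_1}$ and $\mathrm{Nm}_{f_2}$ gives $\alpha \sim \iota'^*\alpha$ exactly, with no torsion caveat. So, up to finitely many choices, a fiber of $\phi_a$ is $\{e \in \Pic^0(E) : D_2 - p_2^*e \text{ effective}\}$.

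For $a \geq 1$ one always has $\deg D_1 \geq g(C_1)$, so the $D_1$-side imposes nothing, and, as you noticed, neither does the norm condition. Everything therefore rests on your claim that a general point of $W^0_d(C_2)$, $d = g(C_2)-1-a$, has only finitely many $p_2^*\Pic^0(E)$-translates lying in $W^0_d(C_2)$. That claim is equivalent to $W^0_d(C_2)$ not being invariant under translation by the elliptic curve $p_2^*\Pic^0(E)$. It does not follow from $\dim W^0_d = d$ or from the irreducibility of $Z_a$: a proper irreducible subvariety of an abelian variety can perfectly well be a union of translates of an elliptic curve.

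To close the gap you need a real input. One option is $W^0_d(C_2) + W^0_{g(C_2)-1-d}(C_2) = W^0_{g(C_2)-1}(C_2)$, valid because $d \leq g(C_2)-2$. Any translation preserving $W^0_d$ then preserves a theta divisor, whose translation stabilizer is trivial because the polarization is principal. Combine this with the surjectivity of $Z_a \to W^0_d(C_2)$ for $a \geq 1$, and you get generic finiteness.

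The paper avoids the fiber analysis altogether. At a general point of $Z_a$, $D_2$ is $p_2$-simple and $h^0(C_1,D_1)=a$. Lemma~\ref{Lem:Simple} then gives $h^0(\WC, f_1^*D_1+f_2^*D_2) \leq a = h^0(C_1,D_1)$, so the complete linear system is $f_1^*|D_1| + f_2^*D_2$ and the pair is determined by the class. This yields generic injectivity, not just finiteness. Your one-line appeal to Lemma~\ref{Lem:Simple} points in this direction but never states this mechanism.

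Your treatment of $Y_{g-1}$ reaches the right conclusion for the wrong reason. The map $f_1$ is not \'etale. Since $g(\WC) = 2g-1$ and $g(C_1)=1$, Riemann--Hurwitz gives $f_1$ exactly $4g-4$ ramification points; when $g(C_1)=1$ it is $f_2$ that is \'etale. Moreover, $\iota$ is fixed-point free, so the branch loci of $p_1$ and $p_2$ are disjoint and $\WC \cong C_1 \times_E C_2$. Thus $f_1$ \emph{is} pulled back from $E$: it is the base change of $p_2$, the opposite of what you assert.

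The correct argument is simpler. A ramified double cover induces an injective pullback on $\Pic^0$. Hence $f_1^*\eta \neq 0$ for the nonzero $\eta \in \ker \mathrm{Nm}_{p_1}$, and the two points of $Z_{g-1}$ have distinct images in $Y_{g-1}$.
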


\begin{proof}
For $a \leq g-2$, $Z_a$ is irreducible by Lemma~\ref{Lem:Irreducible}, hence its image $Y_a$ is irreducible as well.  When $a=0$, by \cite[Proposition~5.2.1]{Debarre}, we have $\mathrm{dim} (Y_a) = g-2$, hence the condition $a \geq 1$ in the statement above.  For the remaining statements, it suffices to prove that the map from $Z_a$ to $Y_a$ is generically injective.  If $a \geq 1$, then for a general element $([D_1],[D_2]) \in Z_a$, we have $[D_2]$ is $p_2$-simple and $h^0 (C_1,D_1) = a$.  Hence, by Lemma~\ref{Lem:Simple}, we have
\[
h^0 (\WC, f_1^* D_1 + f_2^* D_2) \leq 2h^0 (C_1, D_1) - a = h^0 (C_1,D_1).
\]
In other words, every section of $f_1^* D_1 + f_2^* D_2$ is of the form $f_1^* s_1 + f_2^* s_2$ for $s_1 \in H^0 (C_1,D_1)$ and $s_2$ the unique element of $H^0 (C_2,D_2)$.  Thus, the divisor classes $[D_1], [D_2]$ are determined by $[f_1^*D_1 + f_2^* D_2]$, hence the map is generically injective.
\end{proof}

\subsection{Prym-Brill-Noether Theory of $\mathcal{R}_{B,g,0}$}
\label{Sec:PBNGenus1}

In this section, we focus on the case where $g(C_1) = 1$.  Our goal is to prove Proposition~\ref{Prop:BiellipticGenus1}, which identifies the Prym-Brill-Noether varieties $V^r(C,f)$ with the varieties $Y_{r+1}$.  This was previously known in the case $r=2$ \cite[Corollary~5.7]{Horing}.  One containment is straightforward.


\begin{lemma}
\label{Lem:PBNContains}
We have $Y_{r+1} \subseteq V^r (C,f)$.
\end{lemma}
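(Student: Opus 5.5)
We must show: if $([D_1],[D_2]) \in Z_{r+1}$ (with $g(C_1)=1$), then $D := f_1^*D_1 + f_2^*D_2$ is a Prym divisor with $\rk(D) \geq r$. Debarre's result already quoted gives $Y_a \subseteq P^r(C,f)$ exactly when $a \equiv r+1 \Mod{2}$, and $a=r+1$ satisfies this. So $Y_{r+1} \subseteq P^r(C,f)$: the norm is canonical and the parity of $h^0$ is the correct one. It remains only to show $h^0(\WC, D) \geq r+1$.

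To bound $h^0$ from below, note that $\deg D_1 = g(C_1)-1+(r+1) = r+1$. Since $C_1$ has genus one and $r+1 \geq 1$, Riemann--Roch gives $h^0(C_1,D_1) = r+1$. The divisor $D_2$ is effective of degree $g(C_2)-1-(r+1) = g-r-2$, since $g(C_2) = g$. Choose an effective $D_2' \in |D_2|$ and a section $s_2$ with divisor $D_2'$. Then the map
\[
H^0(C_1, D_1) \to H^0(\WC, f_1^*D_1 + f_2^*D_2'), \quad s \mapsto f_1^*s \cdot f_2^*s_2,
\]
is injective. Pullback along the surjective map $f_1$ is injective, and multiplication by the nonzero section $f_2^*s_2$ is injective on the integral curve $\WC$. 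Equivalently, the divisors $f_1^*E + f_2^*D_2'$ for $E \in |D_1|$ give an $r$-dimensional linear subsystem of $|D|$. Hence $h^0(\WC,D) \geq r+1$, so $\rk(D) \geq r$ and $[D] \in V^r(C,f)$.

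There is no real obstacle; the main point is to check the degree bookkeeping and that Debarre's parity criterion applies. One must also confirm that $Z_{r+1}$ makes sense, that is, $g-r-2 \geq 0$; if $Z_{r+1}$ is empty the containment is trivial.
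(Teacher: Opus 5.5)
Your proposal is correct and is essentially the paper's argument: inject $H^0(C_1,D_1)$ into $H^0(\WC, f_1^*D_1+f_2^*D_2)$ by pulling back along $f_1$ and multiplying by a fixed nonzero section of $f_2^*D_2$, then use Riemann--Roch on $C_1$ to get $h^0 \geq r+1$. You also spell out the membership $Y_{r+1} \subseteq P^r(C,f)$ via Debarre's parity criterion, which the paper leaves implicit; note that Riemann--Roch already gives $h^0(C_1,D_1)\geq r+1$ for any $g(C_1)$, so the genus-one assumption is not needed for that step.
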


\begin{proof}
Let $([D_1],[D_2]) \in Z_{r+1}$.  Since $h^0 (C_2 , D_2 ) \geq 1$ and pullback of sections is injective, we have the following inclusions:
\[
H^0 (C_1, D_1) \hookrightarrow H^0 (\WC, f_1^* D_1) \hookrightarrow H^0 (\WC, f_1^* D_1 + f_2^* D_2).
\]
Hence,
\[
h^0 (C_1 , D_1) \leq h^0 (\WC, f_1^* D_1 + f_2^* D_2).
\]
Since $\deg (D_1) = g(C_1)+r$, by Riemann-Roch we have $h^0 (C_1 , D_1 ) \geq r+1$.  It follows that $h^0 (\WC, f_1^* D_1 + f_2^* D_2) \geq r+1$, hence $f_1^* [D_1] + f_2^* [D_2] \in V^r (C,f)$.
\end{proof}

For the reverse containment, we need a preliminary lemma.

\begin{lemma}
\label{Lem:Containment}
Let $([D_1],[D_2]) \in Z_a$.  If $[D_1]$ is not $p_1$-simple, then $f_1^* [D_1] + f_2^* [D_2] \in Y_a \cap Y_{a-2}$.  Similarly, if $[D_2]$ is not $p_2$-simple, then $f_1^* [D_1] + f_2^* [D_2] \in Y_a \cap Y_{a+2}$. 
\end{lemma}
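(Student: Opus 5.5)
Since $[D_1]$ is not $p_1$-simple, some $D_1' \in |D_1|$ satisfies $D_1' \geq p_1^* x$ for a point $x \in E$. Because $[D_1]$ and $[D_1']$ are the same class, we may replace $D_1$ by $D_1'$ and write
\[
D_1 = E_1 + p_1^* x
\]
with $E_1 \geq 0$ of degree $g(C_1)-1+(a-2)$. Choose any point $y \in C_2$ with $p_2(y) = x$, and set
\[
E_2 := D_2 + p_2^* x - y \geq D_2 + y' ,
\]
where $y'$ is the other point of the fiber. In particular $E_2$ is effective, of degree $g(C_2)-1-(a-2)$.

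We now check that $([E_1],[E_2]) \in Z_{a-2}$. For the norm condition, note that $\mathrm{Nm}_{p_1}(p_1^*x) = 2x$ and $\mathrm{Nm}_{p_2}(p_2^*x - y) = 2x - x = x$. Hence
\[
\mathrm{Nm}_{p_1}(E_1) + \mathrm{Nm}_{p_2}(E_2) \sim \mathrm{Nm}_{p_1}(D_1) - 2x + \mathrm{Nm}_{p_2}(D_2) + x .
\]
This is off from $\delta$ by $-x$, so the naive choice fails. We fix it by instead taking
\[
E_2 := D_2 + p_2^* x ,
\]
which is effective of degree $g(C_2)-1-a+2$, exactly as required. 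Its norm is $\mathrm{Nm}_{p_2}(D_2) + 2x$, so the norm sum is again $\sim \delta$. Thus $([E_1],[E_2]) \in Z_{a-2}$.

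It remains to compare the images in $\Pic(\WC)$. We have $f_1^* p_1^* x = f_2^* p_2^* x$, since both equal the pullback of $x$ along $\WC \to E$ by commutativity of \eqref{eq:classical}. Therefore
\[
f_1^* E_1 + f_2^* E_2 = f_1^* D_1 - f_1^*p_1^*x + f_2^* D_2 + f_2^* p_2^* x = f_1^* D_1 + f_2^* D_2 .
\]
So the class lies in $Y_{a-2}$, and it lies in $Y_a$ by definition.

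The second statement is symmetric. Write $D_2 = E_2 + p_2^* x$ with $E_2 \geq 0$, and set $E_1 = D_1 + p_1^* x$. The same computation gives membership in $Z_{a+2}$ and equality of pullbacks. There is no real obstacle. The only points needing care are that effectivity survives: $E_1$ is effective because $D_1' \geq p_1^* x$ (and $E_2$ because $D_2' \geq p_2^*x$ in the symmetric case). The degrees also match the definition of $Z_{a\mp 2}$.
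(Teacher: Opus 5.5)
Your proof is correct and is essentially the paper's argument: you write a representative of $[D_1]$ as $E_1 + p_1^*x$, use $f_1^*p_1^*x = f_2^*p_2^*x$ to move the fiber over to $C_2$, and observe that $([E_1],[D_2+p_2^*x]) \in Z_{a-2}$; you also check the degree and norm conditions explicitly, which the paper leaves implicit. You should delete the abandoned first attempt with $E_2 = D_2 + p_2^*x - y$, which also misstates its degree (it is $g(C_2)-a$, not $g(C_2)-1-(a-2)$).
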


\begin{proof}
The divisor class $f_1^* [D_1] + f_2^* [D_2]$ is in $Y_a$ by definition.  We consider the case where $[D_1]$ is not $p_1$-simple.  The case where $[D_2]$ is not $p_2$-simple follows by a similar argument.  If $[D_1]$ is not $p_1$-simple, then by definition, there exists a point $x \in E$ such that $D_1 - p_1^* (x)$ is equivalent to an effective divisor $A$.  It follows that
\[
f_1^* D_1 + f_2^* D_2 \sim (f_1^* A + f_1^* p_1^* x) + f_2^* D_2 = f_1^* A + (f_2^* p_2^* x + f_2^* D_2) .
\]
The divisor class on the righthand side is in $Y_{a-2}$, and the result follows.
\end{proof}

\begin{corollary}
\label{Cor:WhenContains}
If $a \geq 1$, then $Y_{a+2} \subseteq Y_a$.  Moreover, if $g(C_1)=1$, for $([D_1],[D_2]) \in Z_a$, one has $f_1^* [D_1] + f_2^* [D_2] \in Y_{a+2}$ if and only if $[D_2]$ is not $p_2$-simple.
\end{corollary}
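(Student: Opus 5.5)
For the first claim, that $Y_{a+2} \subseteq Y_a$ when $a \geq 1$, I will take an arbitrary element $f_1^*[D_1] + f_2^*[D_2] \in Y_{a+2}$ with $([D_1],[D_2]) \in Z_{a+2}$. The plan is to show that $[D_1]$ is not $p_1$-simple and then apply Lemma~\ref{Lem:Containment}, which places the class in $Y_{a+2} \cap Y_a$.

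\begin{itemize}
\item Since $a \geq 1$, we have $\deg D_1 = g(C_1)+1+a \geq g(C_1)+2$.
\item By Riemann--Roch, $h^0(C_1,D_1) \geq a+2 \geq 3$.
\item Pick any $x \in E$. The pullback $p_1^* x$ has degree $2$, so $D_1 - p_1^* x$ has degree $g(C_1)-1+a$.
\item Imposing two points on a complete linear system drops $h^0$ by at most two. Hence $h^0(D_1 - p_1^*x) \geq h^0(D_1) - 2 \geq a \geq 1$.
\end{itemize}

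So $D_1 - p_1^*x$ is equivalent to an effective divisor $A$, and $D_1 \sim A + p_1^*x$. By the paper's definition, this means the class $[D_1]$ is not $p_1$-simple, and Lemma~\ref{Lem:Containment} gives the inclusion.

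For the second claim, assume $g(C_1)=1$ and $([D_1],[D_2]) \in Z_a$. The ``if'' direction is exactly the second half of Lemma~\ref{Lem:Containment}: if $[D_2]$ is not $p_2$-simple, the class lies in $Y_{a+2}$.

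For the ``only if'' direction, I argue by contrapositive. Suppose $[D_2]$ is $p_2$-simple and the class $L = f_1^*[D_1] + f_2^*[D_2]$ also lies in $Y_{a+2}$. The goal is a contradiction in $h^0$.

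\begin{itemize}
\item Because $g(C_1)=1$ and $\deg D_1 = a$, Riemann--Roch on the elliptic curve $C_1$ gives $h^0(D_1) = a$ (this uses $a \geq 1$).
\item Lemma~\ref{Lem:Simple} then gives
\[
h^0(L) \leq 2h^0(D_1) + g(C_1) - 1 - \deg D_1 = 2a - a = a .
\]
\item On the other hand, $L \in Y_{a+2}$ means $L = f_1^*[D_1'] + f_2^*[D_2']$ with $\deg D_1' = a+2$ on $C_1$. The argument of Lemma~\ref{Lem:PBNContains} gives $h^0(L) \geq h^0(C_1,D_1') = a+2$.
\end{itemize}

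Since $a+2 > a$, these bounds contradict each other, so $[D_2]$ cannot be $p_2$-simple.

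I expect the main point of care to be the edge cases in this lower bound. The lower bound needs $D_2'$ effective, which holds because $Z_{a+2}$ consists of effective classes. The case $a \leq 0$ needs separate handling, since there $h^0(D_1)$ could be $0$ or the class could be trivial. I would treat the second claim under the implicit hypothesis $a \geq 1$ carried over from the first sentence. If $a \leq 0$ must be covered too, I would handle it separately: for $a<0$, $Z_a$ is empty because $\deg D_1 < 0$ on an elliptic curve, and $a=0$ is a short direct check.
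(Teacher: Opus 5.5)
Your proof is correct and follows the paper's argument essentially line for line. For the first claim, Riemann--Roch gives $h^0(C_1,D_1)\ge 3$, so $[D_1]$ is not $p_1$-simple and Lemma~\ref{Lem:Containment} yields $Y_{a+2}\subseteq Y_a$. For the second claim, the paper likewise plays the upper bound $h^0\le a$ from Lemma~\ref{Lem:Simple} against the lower bound $h^0\ge a+2$ from the argument of Lemma~\ref{Lem:PBNContains}, and it tacitly uses $a\ge 1$ there (to get $h^0(C_1,D_1)=a$), so restricting to that range matches the paper's intended scope.
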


\begin{proof}
Let $([D_1],[D_2]) \in Z_{a+2}$.  Since $\deg (D_1) = g(C_1)+1+a \geq g(C_1)+2$, by Riemann-Roch we have $h^0 (C_1 , D_1 ) \geq 3$.  It follows that, for any effective divisor $D'$ of degree 2 on $C_1$, $D_1 - D'$ is equivalent to an effective divisor.  In particular, for any point $x \in E$, $D_1 - p_1^* (x)$ is equivalent to an effective divisor.  In other words, $[D_1]$ is not $p_1$-simple, so by Lemma~\ref{Lem:Containment}, we have $f_1^* [D_1] + f_2^* [D_2] \in Y_a$.  Since $([D_1],[D_2]) \in Z_{a+2}$ is arbitrary, we see that $Y_{a+2} \subseteq Y_a$.

Now, let $g(C_1) = 1$ and let $([D_1],[D_2]) \in Z_a$.  By Lemma~\ref{Lem:Containment}, if $[D_2]$ is not $p_2$-simple, then $f_1^* [D_1] + f_2^* [D_2] \in Y_{a+2}$.  If $[D_2]$ is $p_2$-simple, then by Lemma~\ref{Lem:Simple}, we have
\[
h^0 (\WC, f_1^* D_1 + f_2^* D_2) \leq 2h^0 (C_1, D_1) - a = a.
\]
By Lemma~\ref{Lem:PBNContains}, however, if $f_1^* [D_1] + f_2^* [D_2] \in Y_{a+2}$, then $h^0 (\WC, f_1^* D_1 + f_2^* D_2) \geq a+2$, a contradiction.
\end{proof}

We now prove Proposition~\ref{Prop:BiellipticGenus1}.

\begin{proof}[Proof of Proposition~\ref{Prop:BiellipticGenus1}]
Let $g(C_1)=1$.  By Lemma~\ref{Lem:PBNContains}, we have $Y_{r+1} \subseteq V^r (C,f)$.  We now prove the reverse containment.

Let $[D] \in V^r (C,f)$.  If $r$ is odd, then by \cite[Proposition~5.2.1]{Debarre}, we have $[D] \in Y_0$.  Similarly, if $r$ is even, then by \cite[Corollary~5.7]{Horing}, we have $[D] \in Y_3$.  Let $a$ be the maximum integer such that $[D] \in Y_a$.  Then there exists $([D_1],[D_2]) \in Z_a$ such that $D \sim f_1^* D_1 + f_2^* D_2$.  If $[D_2]$ is not $p_2$-simple, then by Lemma~\ref{Lem:Containment}, we have $[D] = [f_1^* D_1 + f_2^* D_2] \in Y_{a+2}$, contradicting the maximality of $a$.  It follows that $[D_2]$ is $p_2$-simple, hence by Lemma~\ref{Lem:Simple}, we have
\[
h^0 (\WC, D) = h^0 (\WC, f_1^* D_1 + f_2^* D_2) \leq 2h^0 (C_1, D_1) - a.
\]
By Riemann-Roch, the expression on the right is at most 2 if $a=0$ and equal to $a$ if $a \geq 1$.  Since $r \geq 2$ and $[D] \in V^r (C,f)$, it follows that $a \geq r+1$.  By Corollary~\ref{Cor:WhenContains}, we then have $[D] \in Y_{r+1}$.
\end{proof}

For our lifting theorem, we will need the following weak result about the intersection theory of $Z_a$.  We will make a more precise class computation in Corollary~\ref{Cor:Class} below.  Note that we do not claim that the intersection is reduced.  We thank Philip Engel for suggesting the degeneration argument in the proof of Lemma~\ref{Lem:Intersection}.

\begin{lemma}
\label{Lem:Intersection}
Let $g(C_1) = 1$.  For $r \geq 2$, there exists an integer $m_r \geq 2$ such that
\[
[ V^r (C,f) ] =  m_r \cdot \frac{\Theta^{r+1}}{(r+1)!} .
\]
Moreover, the intersection of $V^r (C,f)$ with $g-r-2$ general translates of $V^1 (C,f)$ consists of at least 
\[
2 \cdot \frac{(g-1)!}{(r+1)!}
\]
distinct points.
\end{lemma}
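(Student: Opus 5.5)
The plan is to degenerate the cover inside $\mathcal{R}_{B,g,0}$ to a case where $V^r$ visibly splits into pieces, and to compute the class there.

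\textbf{Step 1: the class is a multiple of the minimal class.} By Proposition~\ref{Prop:BiellipticGenus1}, $V^r(C,f) = Y_{r+1}$ has dimension $g-r-2$, so it has codimension $r+1$ in the $(g-1)$-dimensional Prym variety. I will show that its class is proportional to $\Theta^{r+1}/(r+1)!$. The classical route is that the Prym-Brill-Noether locus has the expected dimension, so by the determinantal description of De Concini--Pragacz its class is $2^{\binom{r}{2}}\Theta^{\binom{r+1}{2}}/\ldots$. That description fails here, because the dimension $g-r-2$ exceeds the expected dimension $g-1-\binom{r+1}{2}$ when $r \geq 2$.

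Instead I will work with the monodromy of the family of covers in $\mathcal{R}_{B,g,0}$ acting on the cohomology of the Prym. For a very general cover, the Hodge classes of degree $2(r+1)$ should be spanned by $\Theta^{r+1}$. The Prym of a very general member of this family should still be simple with $\mathrm{End} = \Z$, since the family has $g-1$ moduli, namely the branch points of $p$ on $E$. Granting this, $[V^r] = m_r \Theta^{r+1}/(r+1)!$ for a rational number $m_r$. Integrality of $m_r$ comes from the fact that $\Theta^{r+1}/(r+1)!$ is primitive integral, being the minimal class for a principal polarization. The constant $m_r$ is locally constant in families, so it suffices to compute it, or bound it below by $2$, on a degeneration.

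\textbf{Step 2: degenerate, using the two-point case for intuition.} When $g = r+2$, Corollary~\ref{Cor:Irreducible} says $V^r = Y_{g-1}$ consists of two points, and $\Theta^{g-1}/(g-1)! = 1$. So in this case $m_r = 2$ directly, provided the two points are reduced.

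For general $g$, I will degenerate $C$ (Engel's suggestion) so that $\delta$ and the branch locus specialize. Concretely, I will let branch points of $p$ collide in pairs, so that $C$ acquires nodes and the Prym degenerates to a semiabelian or product variety. The limit of $Y_{r+1}$ should then contain at least two components, coming from the two choices of $\delta' \in \mathrm{Nm}_{p_1}^{-1}(\delta)$ as in Lemma~\ref{Lem:Irreducible}. Each component should look like a $Y_{r+1}$ of a lower-genus Prym times a factor, with class at least the minimal class. This gives $m_r \geq 2$.

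An alternative I would try first, since it is more concrete, is to intersect directly. $Z_{r+1}$ maps to $\mathrm{Pic}^{r+1}(C_1)$, where $C_1$ is elliptic, and the condition $\mathrm{Nm}_{p_1}(D_1) + \mathrm{Nm}_{p_2}(D_2) \sim \delta$ fibers $Z_{r+1}$ over the two-element set of torsors. This suggests the factor $2$ comes from $\ker \mathrm{Nm}_{p_1}$ having order $2$.

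\textbf{Step 3: the point count.} Granting $[V^r] = m_r \Theta^{r+1}/(r+1)!$, I will use that $V^1(C,f)$ is a translate of the Prym theta divisor, with class $\Theta$. The intersection number is then
\[
m_r \frac{\Theta^{g-1}}{(r+1)!} = m_r \frac{(g-1)!}{(r+1)!} \geq 2\frac{(g-1)!}{(r+1)!}.
\]
To get distinct points, I need the intersection with general translates to be transverse. By Kleiman-type transversality for translates on an abelian variety, in characteristic zero or in good characteristic, the intersection of $V^r$ with $g-r-2$ general translates of $\Theta$ is reduced and supported on the smooth locus. That locus is dense, since $Y_{r+1}$ is irreducible and hence generically reduced in its reduced structure. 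The points are therefore distinct in number equal to the degree computed with the reduced structure, which is still at least $2(g-1)!/(r+1)!$. This holds because the reduced class $m'_r \Theta^{r+1}/(r+1)!$ also has $m'_r \geq 2$ by the same degeneration applied to the reduced scheme.

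\textbf{Main obstacle.} I expect Step 2, establishing $m_r \geq 2$, to be the hardest part. It requires controlling the limit of $Y_{r+1}$ and showing it has at least two components of full dimension, each with class at least the minimal one.
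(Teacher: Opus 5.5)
The genuine gap is in Step 2, which carries the whole content of the lemma. You assert that the limit of $Y_{r+1}$ under a collision of branch points ``should'' contain two full-dimensional components, but you never analyze that limit. Colliding branch points creates a non-separating node, so the Prym becomes semiabelian, and tracking cycle classes there is delicate.

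Your heuristic source of the two components also fails on a smooth member. By Lemma~\ref{Lem:Irreducible}, for $r+1 \le g-2$ the map $Z_{r+1} \to W^0_{g-r-2}(C_2)$ is a \emph{connected} \'etale double cover whose fibres are torsors under $\ker \mathrm{Nm}_{p_1}$. So the two choices of $\delta'$ are exchanged by monodromy, and $Z_{r+1}$ does not fibre over a two-element set. Whether they separate in some limit is exactly what would need proof.

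The idea you are missing is that no splitting is needed. Once $[Y_{r+1}]$ and $[Y_{r+1}^{\mathrm{red}}]$ are positive integer multiples $m_r, n_r$ of $\Theta^{r+1}/(r+1)!$, it suffices to rule out the value $1$. That is, you need to show that the Prym of a general cover in $\mathcal{R}_{B,g,0}$ contains no $d$-dimensional subscheme with the minimal class $\Theta^{g-1-d}/(g-1-d)!$. The paper does this by induction on $d$:
\begin{itemize}
\item for $d=1$ it is the Matsusaka--Ran criterion, since such a Prym is not a Jacobian;
\item for $d \ge 2$ one degenerates to the compact-type admissible cover obtained from $[\widetilde{D} \to D] \in \mathcal{R}_{B,g-1,0}$ by attaching elliptic tails. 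Its Prym is $\mathrm{Prym}(D,f) \times \Jac(E)$, and slicing a minimal-class limit by $\mathrm{Prym}(D,f) \times \{0\}$ gives a minimal-class subscheme of dimension $d-1$.
\end{itemize}
The boundary case $g = r+2$ is the two-point case. There you do not need reducedness, since two distinct points already give $m_r, n_r \ge 2$.

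Step 1 is also only asserted, and the justification you sketch would not suffice. $\mathrm{End} = \Z$ controls the Hodge classes in $H^2$ via N\'eron--Severi, but not those in degree $2(r+1)$. For those you need the Mumford--Tate group to be the full symplectic similitude group, which is a monodromy statement you do not prove. The paper obtains it from Debarre's Theorem~4.1(i): Jacobians of hyperelliptic curves of genus $g-1$ lie in the closure of the family of these Pryms. On a general hyperelliptic Jacobian, every subvariety has class an integral multiple of the minimal class.

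With these two ingredients supplied, your Step 3 matches what the paper intends: general translates of $V^1(C,f)$ meet $Y_{r+1}^{\mathrm{red}}$ transversely in $n_r (g-1)!/(r+1)!$ points, with $n_r \ge 2$ doing the work.
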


\begin{proof}
By \cite[Theorem~4.1(i)]{Debarre}, if $X$ is a hyperelliptic curve of genus $g-1$, then the Jacobian $\Jac (X)$ is contained in the closure of $\mathcal{R}_{B,g,0}$ in the moduli space of abelian varieties.  A general hyperelliptic curve has the property that the cohomology class of every subvariety of its Jacobian is an integral multiple of the minimal class $\frac{\Theta^k}{k!}$.  Thus, there exists an integer $m_r$ such that
\[
[ Y_{r+1} ] = m_r \cdot \frac{\Theta^{r+1}}{(r+1)!}.
\]
Similarly, there exists an integer $n_r$ such that the class of the reduced scheme $Y_{r+1}^{\mathrm{red}}$ is equal to $m_r \cdot \frac{\Theta^{r+1}}{(r+1)!}$.  It suffices to show that $m_r$ and $n_r$ are both at least 2.  By Corollary~\ref{Cor:Irreducible}, $Y_{g-1}$ consists of two distinct points, so the statement hold when $r+1 = g-1$.   When $r+1 \leq g-2$, the assumption that $r \geq 2$ implies that $g \geq 5$.  It therefore suffices to show that, for $g \geq 5$ and $d$ in the range $1 \leq d \leq g-2$, the Prym variety $\mathrm{Prym}(C,f)$ of a general cover $[f \colon \WC \to C] \in \mathcal{R}_{B,g,0}$ does not contain a closed subscheme $X$ of dimension $d$ with $[X] = \frac{\Theta^{g-1-d}}{(g-1-d)!}$.

We prove this by induction on $d$.  The base case, where $d=1$, follows from the Matsusaka-Ran criterion \cite[Theorem~3]{Ran} because for a general cover $[f \colon \WC \to C] \in \mathcal{R}_{B,g,0}$, the Prym variety $\mathrm{Prym}(C,f)$ is not a Jacobian.  For the inductive step, let $[f \colon \widetilde{D} \to D] \in \mathcal{R}_{B,g-1,0}$.  Let $E$ be a curve of genus 1, let $C_0$ be the 2-component curve obtained by identifying a point $p \in D$ with a point $q \in E$, and let $\WC_0$ be the 3-component curve obtained by identifying each of the 2 preimage points of $p$ in $\widetilde{D}$ with the point $q$ in two separate copies of $E$.  The admissible cover $[g \colon \WC_0 \to C_0]$ is contained in the closure of $\mathcal{R}_{B,g,0}$ in the moduli space $\overline{\mathcal{R}}_g$.  The Prym variety of this cover is isomorphic to $\mathrm{Prym}(D,f) \times \Jac(E)$.  Thus, if $X$ is a closed subscheme of this Prym variety with $[X] = \frac{\Theta^{g-1-d}}{(g-1-d)!}$, then the intersection of $X$ with $\mathrm{Prym}(D,f) \times \{ 0 \}$ has class $\frac{\Theta^{g-1-d}}{(g-1-d)!}$ as well.  By our induction hypothesis, if $[f \colon \widetilde{D} \to D]$ is sufficiently general, then $\mathrm{Prym}(D,f)$ does not contain a closed subscheme with this class, hence $\mathrm{Prym} (C_0,g)$ cannot contain such a closed subscheme either.
\end{proof}

\section{Prym Divisors on a Bielliptic Loop of Loops}
\label{Sec:LOL}

\subsection{Divisor Theory of Graphs}
\label{Sec:Divisors}

We briefly recall the divisor theory of metric graphs.  The tropical analogue of an algebraic curve is a metric graph.  A \emph{divisor} on a metric graph $\Gamma$ is a finite formal $\Z$-linear combination of points on $\Gamma$.  A divisor $D = \sum D(p) \cdot p$ is \emph{effective}, denoted $D \geq 0$, if $D(p) \geq 0$ for all $p \in \Gamma$.  We often refer to the integer $D(p)$ as the \emph{number of chips} of $D$ at $p$.

The tropical analogue of a rational function on a metric graph $\Gamma$ is a continuous, piecewise-linear function $\varphi \colon \Gamma \to \R$ with integer slopes.  The \emph{order of vanishing} of such a function $\varphi$ at a point $p \in \Gamma$, denoted $\ord_p (\varphi)$, is the sum of the incoming slopes of $\varphi$ at $p$.  The \emph{principal divisor} of $\varphi$ is the divisor
\[
\ddiv (\varphi) := \sum_{p \in \Gamma} \ord_p (\varphi) \cdot p .
\]
Two divisors $D, D'$ on $\Gamma$ are \emph{linearly equivalent}, denoted $D \sim D'$, if their difference is principal.  We write $[D]$ for the linear equivalence class of a divisor $D$.  The \emph{Picard group} of $\Gamma$, denoted $\Pic (\Gamma)$, is the group of linear equivalence classes of divisors.  The \emph{Baker-Norine rank} (or simply \emph{rank}) of a divisor $D$ is equal to $-1$ if $D$ is not equivalent to an effective divisor, and otherwise equal to the maximum integer $r$ such that, for all effective divisors $E$ of degree $r$, $D-E$ is equivalent to an effective divisor.

A \emph{free double cover} of a metric graph $\Gamma$ is a degree 2 covering space $\pi \colon \WG \to \Gamma$ such that the restriction of $\pi$ to each edge in $\WG$ is an isometry onto its image.  A divisor class $[D] \in \Pic (\WG)$ is a \emph{Prym divisor class} if it has canonical norm.  The set of Prym divisor classes is denoted $P (\Gamma,\pi)$.  The \emph{tropical Prym-Brill-Noether variety} is defined to be:
\[
V^r (\Gamma,\pi) := \{ [D] \in P(\Gamma,\pi) \mid \rk (D) \geq r \text{ and } \rk (D) \equiv r \Mod{2} \}.
\]

\subsection{The loop of loops}
\label{Sec:TheLOL}

Let $\Gamma$ be a metric graph with vertices $v_i$, $w_i$ for $i \in \Z/(g-1)\Z$, with 1 edge connecting $w_{i-1}$ to $v_i$ and 2 edges connecting $v_i$ to $w_i$ for all $i$.  See Figure~\ref{Fig:LOL}.  The metric graph $\Gamma$ is called a \emph{loop of loops of genus} $g$.  The union of the two edges connecting $v_i$ to $w_i$ is called the $i$th loop, and denoted $\gamma_i$.  If the two edges comprising $\gamma_i$ have the same length for all $i$, we say that $\Gamma$ is \emph{bielliptic} or 2-\emph{uniform}.  When $\Gamma$ is bielliptic, there exists a harmonic morphism $\psi$ of degree 2 from $\Gamma$ to a single loop $\Sigma$, as pictured in Figure~\ref{Fig:Bielliptic}.  

\begin{figure}[H]
\begin{tikzpicture}

\draw (0,0) circle (2);
\draw [fill=white] (0,2) circle (0.5);
\draw (0.65,2) node {$v_2$};
\draw [fill=white] (1.7,1) circle (0.5);
\draw (1.7,0.35) node {$v_1$};
\draw (1.5,1.6) node {$w_1$};
\draw [fill=white] (1.7,-1) circle (0.5);
\draw (1.7,-0.35) node {$w_6$};
\draw [fill=white] (0,-2) circle (0.5);
\draw [fill=white] (-1.7,-1) circle (0.5);
\draw [fill=white] (-1.7,1) circle (0.5);

\end{tikzpicture}
\caption{A loop of loops of genus 7.}
\label{Fig:LOL}
\end{figure}

\begin{figure}[H]
\begin{tikzpicture}

\draw (0,0) circle (2);
\draw [fill=white] (0,2) circle (0.5);
\draw [fill=white] (1.7,1) circle (0.5);
\draw [fill=white] (1.7,-1) circle (0.5);
\draw [fill=white] (0,-2) circle (0.5);
\draw [fill=white] (-1.7,-1) circle (0.5);
\draw [fill=white] (-1.7,1) circle (0.5);
\draw (0,0) circle (0.75);
\draw (1.75,0) edge[->] (1,0);
\draw (0.87,1.48) edge[->] (0.5,0.85);
\draw (-0.87,1.48) edge[->] (-0.5,0.85);
\draw (-1.75,0) edge[->] (-1,0);
\draw (-0.87,-1.48) edge[->] (-0.5,-0.85);
\draw (0.87,-1.48) edge[->] (0.5,-0.85);

\end{tikzpicture}
\caption{The harmonic morpshim $\psi \colon \Gamma \to \Sigma$.}
\label{Fig:Bielliptic}
\end{figure}

Let $\WG$ be a bielliptic loop of loops of genus $2g-1$ such that, for all $i$, the edge from $w_{i-1}$ to $v_i$ has the same length as the edge from $w_{g+i-2}$ to $v_{g+i-1}$, and the circumference of $\gamma_i$ is the same as that of $\gamma_{g-1+i}$.  Then $\WG$ admits an antipodal involution $\iota$, and the quotient $\Gamma = \WG / \langle \iota \rangle$ is a bielliptic loop of loops of genus $g$.  We write $\pi \colon \WG \to \Gamma$ for the quotient map.  Since $\WG$ is itself bielliptic, there exists a degree 2 harmonic morphism $\pi_1 \colon \WG \to \widetilde{\Sigma}$ to a single loop.  The composition of the antipodal involution $\iota$ with the involution $\iota_1$ exchanging the sheets of $\pi_1$ is a third involution $\iota_2$, and the quotient $\WG / \langle \iota_2 \rangle$ is isomorphic to $\Gamma$.  Together, these maps fit into a commutative diagram:
\begin{equation}
\label{eq:tropical}
\xymatrix {
& \WG \ar[ld]_{\pi_1} \ar[d]^{\pi} \ar[rd]^{\pi_2} & \\
\widetilde{\Sigma} \ar[rd]_{\psi_1} & \Gamma \ar[d]^{\psi} & \Gamma \ar[ld]^{\psi_2} \\
& \Sigma & 
 }
\end{equation}

In \cite{PBN}, the fourth author provides a complete description of Prym divisors on $\WG$.  First, let $\mathbb{T}^0$ be the set of effective divisors of degree $2g-2$ on $\WG$ that have exactly 1 chip on each of the loops $\gamma_i$.  If a divisor class $[D] \in \Pic (\WG)$ has a representative in $\mathbb{T}^0$, then this representative is unique, and we call it the \emph{orientable representative}.  Let $P^0 \subset \Pic (\WG)$ be the subset of divisor classes that have a representative in $\mathbb{T}^0$ and such that the clockwise distance from $v_i$ to the chip on $\gamma_i$ is equal to the counterclockwise distance from $w_{g+i-1}$ to the chip on $\gamma_{g+i-1}$.  Fix a basepoint $p_0 \in \WG$ on the edge from $w_{2g-2}$ to $v_1$, and define $P^1 = \{ [D-p_0 + \iota^* p_0 ] \} \mid [D] \in P^0 \}$.

\begin{proposition} \cite[Proposition~3.3]{PBN}
The set of Prym divisor classes on $\WG$ is the disjoint union of $P^0$ and $P^1$.
\end{proposition}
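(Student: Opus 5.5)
The plan is a direct tropical computation: first describe the Prym classes concretely, then use the parity of the rank to separate $P^0$ from $P^1$. Throughout, write $\WG$ as the loop of loops of genus $2g-1$ with $2g-2$ loops $\gamma_i$ and $\iota$ the antipodal involution, so that $\gamma_i$ and $\gamma_{g-1+i}$ are exchanged.

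The first step is to compute the canonical class of $\Gamma$ and its pullback. Every vertex of $\Gamma$ has valence $3$, so $K_\Gamma=\sum_i (v_i+w_i)$, and $\pi^*K_\Gamma$ is the sum of all vertices of $\WG$, which equals $K_{\WG}$. The next step is to characterize the classes of norm $K_\Gamma$. Every divisor class on $\WG$ of degree $2g-2$ with suitable distribution can be moved, by chip-firing along the bridges and loops, to a representative with one chip on each loop $\gamma_i$ plus a correction supported on the edge containing $p_0$. Concretely, I will use the Abel--Jacobi description $\Pic^{2g-2}(\WG)\cong \prod_i \gamma_i \times (\text{cycle of the big loop})$. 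In these coordinates the norm map $\pi_*$ adds the torus coordinates of $\gamma_i$ and $\gamma_{g-1+i}$, with the orientation reversed by $\iota$, and sends the big-cycle coordinate to twice itself modulo the big-loop length of $\Gamma$. Requiring norm equal to $K_\Gamma$ then forces two conditions. On each pair of loops, the clockwise distance from $v_i$ must equal the counterclockwise distance from $w_{g+i-1}$. On the big cycle, the coordinate must be one of two values differing by half the circumference. These two solutions are exactly $\mathbb{T}^0$-type classes and their translates by $\iota^*p_0-p_0$, since $p_0\mapsto\iota^*p_0$ shifts along the big cycle by half its length.

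The third step is to show $P^0\cap P^1=\emptyset$. For this I will show that $[\iota^*p_0-p_0]\neq 0$: it is a nonzero $2$-torsion class, because on the big cycle of $\WG$ it has coordinate equal to half the circumference. Hence the two cosets of the kernel of the norm are distinct, and the union is disjoint.

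The main obstacle is making the coordinate computation of the norm map rigorous, in particular tracking orientation reversal under $\iota$ and how the big-cycle coordinate doubles. I would handle this by computing the pushforward of a $\mathbb{T}^0$ representative directly, chip by chip. The sum of two chips on $\gamma_i\subset\Gamma$ is linearly equivalent to $v_i+w_i$ precisely when they are symmetric with respect to the reflection of $\gamma_i$ fixing $v_i,w_i$. This yields the stated distance condition. A connectivity and count argument (a torus of dimension $g-1$ in each component) then confirms that the two components exhaust the fiber.
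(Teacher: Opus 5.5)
The paper does not prove this statement; it quotes it from \cite[Proposition~3.3]{PBN}, so your argument has to stand on its own. The overall shape is right: $\mathrm{Nm}^{-1}(K_\Gamma)$ has two components, $P^0$ should fill one, and $P^1$ is its translate into the other. The disjointness step, however, has a genuine gap.

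\textbf{Disjointness.} $P^0$ and $P^1$ are cosets of the \emph{identity component} of $\ker\mathrm{Nm}$, not of $\ker\mathrm{Nm}$ itself. Moreover, $P^0\cap P^1\neq\emptyset$ exactly when $[\iota^*p_0-p_0]\in P^0-P^0$, and $P^0-P^0$ is a $(g-1)$-dimensional connected subset of $\ker\mathrm{Nm}$ containing $0$. So $[\iota^*p_0-p_0]\neq 0$ proves nothing; you need this class to lie in the \emph{other} component of $\ker\mathrm{Nm}$.

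Your justification is also false. The big cycle of $\WG$ is not a circle, and $[\iota^*p_0-p_0]$ is not $2$-torsion. Sliding $p_0$ along its edge to $v_1$ (and $\iota^*p_0$ to $v_g$) does not change the class. Dhar's burning algorithm started at $v_1$ then burns all of $\WG$ against $2v_g$ (three fronts reach $v_g$), so $2v_g\not\sim 2v_1$; in the bielliptic case the class has order $4$.

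A correct argument runs as follows.
\begin{itemize}
\item Take the $\Z$-basis $\gamma_1,\dots,\gamma_{2g-2},\tilde\beta$ of $H_1(\WG,\Z)$, with $\tilde\beta$ an $\iota$-invariant big cycle. Write $\Jac(\WG)=H_1(\WG,\R)/H_1(\WG,\Z)$ with coordinates $(t_1,\dots,t_{2g-2},s)$.
\item $\mathrm{Nm}$ is induced by $\pi_*$, namely $(t,s)\mapsto(t_i+t_{g-1+i},\,2s)$. Hence the two components of $\ker\mathrm{Nm}$ are $s\equiv 0$ and $s\equiv\tfrac12$.
\item The Abel--Jacobi image of $\iota^*p_0-p_0$ is the orthogonal projection, for the edge-length inner product, of the half $c$ of $\tilde\beta$ running from $p_0$ to $\iota^*p_0$. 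Since $c+\iota c=\tilde\beta$ and $c-\iota c$ projects into the $(-1)$-eigenspace spanned by the $\gamma_i-\gamma_{g-1+i}$, its $s$-coordinate is $\tfrac12$. This is the tropical counterpart of the classical fact that $\iota^*p-p$ lies in the non-identity component of $\ker\mathrm{Nm}$.
\item $P^0-P^0$ is connected, contains $0$, and lies in $\ker\mathrm{Nm}$, so it has $s\equiv 0$. Therefore $P^0\cap P^1=\emptyset$.
\end{itemize}

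\textbf{Exhaustion.} This step rests on an unproved and partly incorrect coordinate picture.
\begin{itemize}
\item In coordinates where $\mathrm{Nm}$ is diagonal, the position of a chip on $\gamma_i$ is not a coordinate. Because $\gamma_i$ meets the big cycle in two points, moving one chip along $\gamma_i$ also changes $s$. Only the paired motion, with the partner chip on $\gamma_{g-1+i}$ moving the opposite way, preserves $s$, and that would have to be checked.
\item $\iota$ is a half-turn and preserves orientation. The clockwise/counterclockwise asymmetry comes instead from the fact that $\pi(x)+\pi(y)\sim v_i+w_i$ when the two chips lie on the same edge of $\gamma_i$, symmetric about its midpoint. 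So they are exchanged by the symmetry swapping $v_i$ and $w_i$, not by a reflection fixing them.
\item The normal form ``one chip per loop plus a correction on the edge of $p_0$'' is asserted without proof.
\end{itemize}

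None of this is needed. The chip-by-chip pushforward shows $P^0\subseteq\mathrm{Nm}^{-1}(K_\Gamma)$, and $P^1$ lies there too because $\pi\circ\iota=\pi$. The parametrization $\prod_{i=1}^{g-1}\gamma_i\to P^0$ is continuous, and it is injective by uniqueness of orientable representatives. By invariance of domain and compactness, $P^0$ is an entire component of the fiber, which is a disjoint union of two $(g-1)$-dimensional tori. The same holds for $P^1$, and the disjointness argument above shows the two are different components. A dimension count alone would not suffice: injectivity is what gives surjectivity onto a component.
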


The earlier paper \cite{PBN} also describes the Prym-Brill-Noether variety of the cover $\pi \colon \WG \to \Gamma$.  When $\Gamma$ is bielliptic, the exposition can be simplified significantly.  Given a positive integer $r$ and elements $i \in \Z/(2g-2)\Z, j \in \Z/2\Z$, we define the point:
\[
x^r_{i,j} := \left\{ \begin{array}{ll}
w_i &\text{if } r \text{ is even and }j \equiv \frac{r}{2} \Mod{2}, \text{ or } \\
& r \text{ is odd, } j \in \{ 1, \ldots , g-1 \}, \text{ and } j \equiv \frac{r+1}{2} \Mod{2}, \text{ or } \\
& r \text{ is odd, } j \in \{ g, \ldots , 2g-2 \}, \text{ and } j \equiv \frac{r-1}{2} \Mod{2} \\
v_i &\text{otherwise. }
\end{array} \right.
\]

The Coxeter group $I_2 (r+1)$ is given in terms of generators and relations as
\[
W = \langle r_0, r_1 \mid r_0^2 = r_1^2 = (r_0 r_1)^{r+1} = \epsilon \rangle ,
\]
where $\epsilon$ denotes the identity in the group.  The group $W$ is isomorphic to the dihedral group of order $2r+2$.  In this Coxeter group, there are exactly 2 reduced words of maximal length $r+1$.  Specifically, there is an alternating sequence of the two generators that begins with $r_0$, and an alternating sequence of the two generators that begins with $r_1$.  A \emph{lingering word} of length $n$ in this Coxeter group is a function $s \colon [n] \to \{ r_0 , r_1 , \epsilon \}$, typically written $Q = s_1 s_2 \cdots s_n$.  Given a lingering word $Q$ of length $g-1$, we define
\[
T(Q) := \{ [D] \in P^r \mid \text{ if } s_i = r_j, \text{ then the chip of the orientable representative of } D \text{ on } \gamma_i \text{ is at } x^r_{i,j} \} .
\]
By \cite[Lemma~4.7]{PBN}, the set $T(Q)$ is a real torus of dimension $\# \{ i \mid s_i = \epsilon \}$.

\begin{theorem} \cite[Theorem~6.8]{PBN}
\label{Thm:PBNBielliptic}
Let $\Gamma$ be a bielliptic loop of loops and $\pi \colon \WG \to \Gamma$ the quotient by the antipodal involution.  If $r \geq 2$, then
\[
V^r (\Gamma,\pi) = \bigcup T(Q),
\]
where the union is over all lingering words of length $g-1$ in the Coxeter group $I_2 (r+1)$ that contain one of the two reduced subwords of maximal length $r+1$.

In the special case where $r=2$, the statement holds for every loop of loops $\Gamma$, regardless of edge lengths.
\end{theorem}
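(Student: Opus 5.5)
The plan is to specialize the general description of $V^r(\Gamma,\pi)$ for an arbitrary loop of loops from \cite{PBN} to the bielliptic case, where every loop $\gamma_i$ of $\WG$ has torsion order $2$. The key fact is that on a loop whose two edges have equal length, $2v_i \sim 2w_i$, and $v_i,w_i$ are the only points where a chip lets the divisor "turn" at that loop.

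\textbf{Step 1: normal form.} By \cite[Proposition~3.3]{PBN}, every Prym class lies in $P^0$ or $P^1$. After the shift $D \mapsto D - p_0 + \iota^* p_0$ in the $P^1$ case, I may work with the orientable representative in $\mathbb{T}^0$: one chip on each $\gamma_i$. The Prym condition pairs the chip on $\gamma_i$ with the chip on $\gamma_{g-1+i}$. Hence a Prym class is determined by the positions of the chips on $\gamma_1,\dots,\gamma_{g-1}$, and the problem reduces to a combinatorial rank computation on these $g-1$ positions.

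\textbf{Step 2: compute the rank via a lattice path.} I would compute $\rk(D)$ by running Dhar's burning algorithm and reduced divisors around the cycle, as in the chain-of-loops arguments of \cite{CJP}. Record, for each $i$, a letter $s_i$:
\begin{itemize}
\item $s_i = r_j$ if the chip on $\gamma_i$ sits at $x^r_{i,j}$;
\item $s_i = \epsilon$ if the chip is anywhere else.
\end{itemize}
The choice between $v_i$ and $w_i$ is governed by the parity conventions in $x^r_{i,j}$, which encode the dual parity constraints $P^0$ versus $P^1$. Pushing $r$ chips around the graph, a chip at $v_i$ or $w_i$ lets the reduced divisor change "row" exactly as the reflections $r_0,r_1$ act on the $r+1$ chambers of the dihedral group $I_2(r+1)$. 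A chip elsewhere forces lingering. One then shows that $D$ has rank $\geq r$ if and only if the word $s_1\cdots s_{g-1}$ realizes the longest element of $I_2(r+1)$, which is equivalent to containing one of the two alternating reduced subwords of length $r+1$. Both directions are needed:
\begin{itemize}
\item \emph{Sufficiency:} for each effective $E$ of degree $r$, exhibit a path in the word absorbing $E$.
\item \emph{Necessity:} if no reduced subword of length $r+1$ occurs, construct an explicit $E$ of degree $r$ (chips at suitable $v_i$'s) such that $D-E$ is not equivalent to an effective divisor, certified by the burning algorithm.
\end{itemize}
Combining Step 2 with \cite[Lemma~4.7]{PBN} then gives $V^r(\Gamma,\pi) = \bigcup T(Q)$ as stated.

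\textbf{Main obstacle.} The hardest step is the necessity direction on a \emph{closed} loop rather than a chain, because the wrap-around can create extra linear equivalences. I would handle this the way \cite{PBN} does: cut at the basepoint $p_0$ and use the Prym symmetry between $\gamma_i$ and $\gamma_{g-1+i}$. This symmetry forces the path on the second half to mirror the first, which reduces the closed-loop problem to a chain of $g-1$ loops with prescribed boundary behaviour.

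\textbf{The case $r=2$.} Here $I_2(3)$ has reduced words of length only $3$. The argument should then only ever use chips exactly at $v_i$ or $w_i$, and never use the torsion relation $2v_i \sim 2w_i$ in an essential way, since three alternating turns already suffice. This is why the edge-length hypothesis can be dropped when $r=2$; I would verify this by rechecking that the burning certificates in the necessity step are insensitive to the edge lengths.
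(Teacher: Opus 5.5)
The paper does not prove this statement. It is quoted from \cite[Theorem~6.8]{PBN} and only rewritten in the simplified bielliptic notation ($x^r_{i,j}$, $I_2(r+1)$, $T(Q)$). So your opening sentence, ``specialize \cite{PBN},'' is all the paper does. What you actually outline, however, is a reproof of the rank computation, and as such it has a genuine gap.

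Your reduction is correct. With $s_i$ read off from the chip positions, $[D]\in\bigcup T(Q)$ if and only if $s_1\cdots s_{g-1}$ contains an alternating subword of length $r+1$; this follows directly from the definition of $T(Q)$. But the claim that this is equivalent to $\rk(D)\geq r$ is the entire theorem, and Step~2 only announces it. No lattice path is defined, no ``path absorbing $E$'' is exhibited, and no burning certificate is constructed.

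Moreover, the engine you propose is the chain-of-loops one, and its ``key fact'' is false on $\WG$. The relation $2v_i\sim 2w_i$ holds on the cycle $\gamma_i$, and on a chain of loops where $\gamma_i$ separates. It never holds on the loop of loops, because deleting the interior of $\gamma_i$ leaves $\WG$ connected. Suppose $\ddiv(\varphi)=2w_i-2v_i$. Then $\varphi$ has a common integer slope $s$ on the two edges of $\gamma_i$ (each of length $\ell$), and carries current $2s-2$ through the rest of $\WG$ from $w_i$ back to $v_i$. Harmonicity there forces $s\ell=(2-2s)\rho$, where $\rho>0$ is the effective resistance of that complementary chain. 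Hence $s\in(0,1)$, which is not an integer. So on $\WG$ every chip move across a loop also pushes chips around the big cycle, and a proof must control exactly this global effect.

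Your proposed repair does not control it.
\begin{itemize}
\item Cutting at $p_0$ changes linear equivalence.
\item The test divisors $E$ in the definition of rank range over all $2g-2$ loops and ignore the pairing $\gamma_i\leftrightarrow\gamma_{g-1+i}$. The mirror symmetry constrains the chips of $D$, not the linear system $|D|$ (indeed $\iota^*D\sim K_{\WG}-D$). So it cannot reduce the sufficiency direction, which must handle every $E$, to a chain of $g-1$ loops.
\item For odd $r$ you normalize away a further point. The divisor whose rank must be computed is $D_0-p_0+\iota^*p_0$ with $D_0\in\mathbb{T}^0$, and the shift does not preserve rank. Your one-chip-per-loop normal form parametrizes the classes, but it is not the divisor to which a lattice-path rank argument would be applied.
\end{itemize}

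Finally, the $r=2$ clause needs its own argument in both directions. When the two edges of $\gamma_i$ have different lengths, $v_i$ and $w_i$ are no longer antipodal, so even the local relation $2v_i\sim 2w_i$ on $\gamma_i$ is lost. Sufficiency, $T(Q)\subseteq V^2(\Gamma,\pi)$, must then be proved without your key fact. Rechecking only the necessity certificates, as you propose, does not establish the claim, and the remark that three alternating turns suffice is a heuristic, not an argument.
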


In other words, the union is over all lingering words $Q$ of length $g-1$ that contain a length-$(r+1)$ alternating subword of $s_0$'s and $s_1$'s.  In \cite{PBN}, the 4th author defines the \emph{lingering subword poset}, whose elements are precisely such words.  Given a Coxeter group $W$, an element $w_0 \in W$, and a positive integer $n$, there is a lingering subword poset $P(W, w_0 , n)$.  For our purposes, the Coxeter group will always be $I_2 (r+1)$ and the element $w_0$ will be the maximal element with respect to Bruhat order, so for ease of notation, we write $P(r+1,n) = P(I_2 (r+1), w_0, n)$.  The elements of $P(r+1,n)$ are lingering words of length $n$ that contain a length-$(r+1)$ alternating subword of $s_0$'s and $s_1$'s.  Given two elements $Q = s_1 \cdots s_n, Q'=s'_1 \cdots s'_n \in P(r+1,n)$, we say that $Q \leq Q'$ if, for all $i$, either $s'_i = s_i$ or $s'_i = \epsilon$.  This poset encodes the containment relations among the tori $T(Q)$.

\begin{lemma}\cite[Lemma~4.8 and Proposition~4.20]{PBN}
\label{Lem:Contains}
For $Q, Q' \in P(r+1, g-1)$, we have $T(Q) \subseteq T(Q')$ if and only if $Q \leq Q'$.  Moreover, the intersection $T(Q) \cap T(Q')$ is nonempty if and only if $Q$ and $Q'$ have a common lower bound, in which case $T(Q) \cap T(Q') = T(Q \wedge Q')$.
\end{lemma}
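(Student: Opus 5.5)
The approach is to make the tori $T(Q)$ completely explicit in coordinates, so that both statements become statements about independent constraints, one per loop.

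\textbf{Coordinates.} Every class in $P^0$ has a unique orientable representative, with one chip on each loop $\gamma_i$ of $\WG$. The Prym condition says that the chip on $\gamma_{g-1+i}$ is determined by the chip on $\gamma_i$. Hence $P^0$ is identified with $\prod_{i=1}^{g-1}\gamma_i$ by recording the chip positions on $\gamma_1,\ldots,\gamma_{g-1}$. The component $P^1$ is the translate of $P^0$ by $[\iota^*p_0-p_0]$, so the same coordinates work after translating. In these coordinates, $T(Q)$ is a product
\[
T(Q)=\prod_{i=1}^{g-1} A_i(Q),
\]
where $A_i(Q)=\{x^r_{i,j}\}$ if $s_i=r_j$, and $A_i(Q)=\gamma_i$ if $s_i=\epsilon$. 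This recovers the torus of dimension $\#\{i\mid s_i=\epsilon\}$. Two facts are then immediate:
\begin{itemize}
\item each $T(Q)$ is nonempty;
\item for fixed $i$, the points $x^r_{i,0}$ and $x^r_{i,1}$ are distinct, since by definition one of them is $v_i$ and the other is $w_i$, and these are distinct points of the loop $\gamma_i$.
\end{itemize}

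\textbf{Containment.} If $Q\le Q'$, then for every $i$ either $s'_i=s_i$ or $s'_i=\epsilon$, so $A_i(Q)\subseteq A_i(Q')$ for all $i$, and hence $T(Q)\subseteq T(Q')$. Conversely, suppose $Q\not\le Q'$. Then there is an index $i$ with $s'_i=r_j$ and $s_i\neq r_j$, so either $s_i=\epsilon$ or $s_i=r_{1-j}$. In both cases $A_i(Q)$ contains a point different from $x^r_{i,j}$: in the first case any other point of $\gamma_i$, in the second $x^r_{i,1-j}$ itself. Choosing such a point in coordinate $i$ and arbitrary points of $A_k(Q)$ in the other coordinates gives an element of $T(Q)$ outside $T(Q')$.

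\textbf{Intersections.} Because the tori are products in the same coordinates, we have
\[
T(Q)\cap T(Q')=\prod_{i=1}^{g-1}\bigl(A_i(Q)\cap A_i(Q')\bigr).
\]
This intersection is empty exactly when there is a conflict, meaning an index $i$ with $\{s_i,s'_i\}=\{r_0,r_1\}$. When there is no conflict, define $Q''$ letterwise: take $s''_i$ to be the non-$\epsilon$ letter among $s_i,s'_i$ if one exists, and $\epsilon$ otherwise. Then $A_i(Q)\cap A_i(Q')=A_i(Q'')$ for every $i$, so the intersection equals $T(Q'')$.

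The remaining point is to check that $Q''$ lies in $P(r+1,g-1)$, is a common lower bound of $Q$ and $Q'$, and is their greatest lower bound $Q\wedge Q'$. Membership holds because $Q''$ retains every non-$\epsilon$ letter of $Q$, so it still contains the length-$(r+1)$ alternating subword that $Q$ contains. By construction $Q''\le Q$ and $Q''\le Q'$. Any common lower bound $R$ must agree with every non-$\epsilon$ letter of both $Q$ and $Q'$, so $R\le Q''$, and therefore $Q''=Q\wedge Q'$.

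Finally, the two conditions for nonemptiness coincide. If $Q$ and $Q'$ have a common lower bound, then at every index its letter must match both $s_i$ and $s'_i$ whenever these are not $\epsilon$, so no conflict can occur. Conversely, if there is no conflict, then $Q''$ itself is a common lower bound.

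\textbf{Main obstacle.} The only delicate step is the coordinate description of $T(Q)$ on the component $P^1$. Because of the translation by $\iota^*p_0-p_0$, and the parity-dependent definition of $x^r_{i,j}$ on $\gamma_i$ versus $\gamma_{g-1+i}$, one must check that the constraint "chip at $x^r_{i,j}$" is a single point condition on the $i$th coordinate that is compatible with the Prym symmetry. Once that is verified, the whole lemma reduces to the coordinatewise bookkeeping above.
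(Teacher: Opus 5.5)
The paper does not prove this lemma; it imports it from \cite{PBN}. Your route is different and more elementary. In the bielliptic case the special points $x^r_{i,j}$ depend only on $i$, $j$ and $r$, so $T(Q)$ is cut out by independent one-loop conditions, and both assertions become letterwise bookkeeping on words. That bookkeeping is correct. This covers the negation of $Q \le Q'$, the conflict criterion via $x^r_{i,0} \neq x^r_{i,1}$, and the identification of the letterwise merge $Q''$ with $Q \wedge Q'$, including $Q'' \in P(r+1,g-1)$. It also covers the equivalence of ``no conflict'' with ``common lower bound.'' For $r$ even the argument is complete, since the parametrization of $P^0$ by the chips on $\gamma_1,\dots,\gamma_{g-1}$ is immediate from the definition of $P^0$. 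The citation buys the statement in the generality of \cite{PBN}; your argument buys a short self-contained proof in the bielliptic setting.

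For $r$ odd there is a real but fillable gap at the step you flag, though the issue is not quite the one you describe. The sentence ``the same coordinates work after translating'' does not follow. Translation by $[\iota^*p_0 - p_0]$ is a bijection $P^0 \to P^1$, but $T(Q) \subseteq P^1$ is defined through the orientable representative of the class in $P^1$ itself, not of its translate in $P^0$. You need two facts:
\begin{enumerate}
\item[(a)] every class in $P^1$ has a representative in $\mathbb{T}^0$, since otherwise the defining condition of $T(Q)$ is not even meaningful;
\item[(b)] the map $\Phi \colon P^1 \to \prod_{i=1}^{g-1}\gamma_i$ recording its chips on the first $g-1$ loops is surjective.
\end{enumerate}
The loops $\gamma_g,\dots,\gamma_{2g-2}$, and the second-half clause in the definition of $x^r_{i,j}$, never enter, because $Q$ has length $g-1$. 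Nor does injectivity of $\Phi$ enter. Indeed $T(Q) = \Phi^{-1}\bigl(\prod_i A_i(Q)\bigr)$, and preimages under a surjection commute with intersections, preserve and reflect inclusions, and are nonempty exactly when the target set is. So with (a) and (b) your bookkeeping goes through verbatim.

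These facts belong to the description of $P^1$ in \cite{PBN}. The paper uses them implicitly whenever it speaks of the orientable representative of a class in $P^1$, e.g.\ in the proof of Proposition~\ref{Prop:LCI}. To verify them yourself, compute how translation by $\iota^*p_0 - p_0$ acts on orientable representatives loop by loop, in the spirit of cases (1)--(4) of that proof. Suppose each chip on $\gamma_1,\dots,\gamma_{g-1}$ is either fixed or replaced by its antipode. Then (a) and (b) follow from the corresponding facts for $P^0$. Moreover, since $v_i$ and $w_i$ are antipodal on a bielliptic loop, conditions at $v_i$ or $w_i$ remain single-point conditions of the same kind.
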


\subsection{Zonotopes and Oriented Matroids}
\label{Sec:Zonotope}

In this section, we show that the intersection lattice of the tori in Theorem~\ref{Thm:PBNBielliptic} is the face lattice of a certain polytope called a zonotope.  Given vectors $\vec{x}_1 , \ldots , \vec{x}_n \in \mathbb{R}^d$, the corresponding \emph{zonotope} is the Minkowski sum:
\[
Z (\vec{x}_1 , \ldots , \vec{x}_n) := \{ a_1 \vec{x}_1 + \cdots a_n \vec{x}_n \mid 0 \leq a_i \leq 1 \text{ for all } i \} .
\]

A set of vectors in $\mathbb{R}^d$ also defines an oriented matroid.  A \emph{signed subset} of a finite set $E$ is a pair $C = (C^+, C^-)$ of disjoint subsets of $E$.  An \emph{oriented matroid} with ground set $E$ is a collection of signed subsets, called \emph{signed circuits}, satisfying the following axioms:
\begin{enumerate}
\item  If $C = (C^+, C^- )$ is a signed circuit, then $C^+ \cup C^- \neq \emptyset$.
\item  If $C = (C^+, C^-)$ is a signed circuit, then so is $-C = (C^-, C^+ )$. 
\item  No proper subset of a signed circuit is a signed circuit.
\item  If $C_0, C_1$ are signed circuits with $C_1 \neq -C_0$ and $e \in C_0^+ \cap C_1^-$, then there exists a signed circuit $C$ with $C^+ \subseteq (C_0^+ \cup C_1^+) \smallsetminus \{ e \}$ and $C^- \subseteq (C_0^- \cup C_1^- ) \smallsetminus \{ e \}$.
\end{enumerate}

Let $\vec{x}_1 , \ldots , \vec{x}_n \in \mathbb{R}^d$.  One can define the corresponding oriented matroid as follows.  Each minimal linear dependence is of the form $\sum \lambda_i \vec{x} = 0$ for some scalars $\lambda_i \in \mathbb{R}$.  Given such a linear dependence, we let $C^+ = \{ i \in [n] \mid \lambda_i > 0 \}$ and $C^- = \{ i \in [n] \mid \lambda_i < 0 \}$.  The signed sets $C = (C^+ , C^-)$ are the signed circuits of an oriented matroid.  If one performs the same construction for all linear dependences, rather than just the minimal ones, one obtains the signed \emph{vectors} of an oriented matroid.  An oriented matroid is called \emph{realizable} if it corresponds to a collection of vectors in $\mathbb{R}^d$ in this way.

The key fact relating zonotopes and oriented matroids is the following.

\begin{proposition} \cite[Proposition~2.2.2]{OrientedMatroids}
\label{Prop:ZonotopeMatroid}
Let $\vec{x}_1 , \ldots , \vec{x}_n \in \mathbb{R}^d$ and let $M$ be the corresponding oriented matroid.  There is an order-reversing bijection between the face lattice of the zonotope $Z (\vec{x}_1 , \ldots , \vec{x}_n)$ and the lattice of covectors of $M$.
\end{proposition}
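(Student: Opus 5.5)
The plan is to work with the realizable description of covectors: for vectors $\vec{x}_1, \ldots, \vec{x}_n \in \mathbb{R}^d$, the covectors of $M$ are exactly the sign vectors
\[
\sigma(c) := \big( \mathrm{sign}(c \cdot \vec{x}_1), \ldots, \mathrm{sign}(c \cdot \vec{x}_n) \big) \in \{+,-,0\}^n,
\]
as $c$ ranges over $(\mathbb{R}^d)^*$. These are the sign vectors orthogonal to the signed vectors coming from linear dependences. They are ordered by $X \leq Y$ if, for each $i$, either $X_i = 0$ or $X_i = Y_i$. I will adjoin a formal top element to the covector lattice, to match the empty face of $Z = Z(\vec{x}_1, \ldots, \vec{x}_n)$.

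To a covector $X$ I assign the set
\[
F_X := \sum_{X_i = +} \vec{x}_i + \sum_{X_i = 0} [0,1]\,\vec{x}_i .
\]
The first step is to check that, for any $c$ with $\sigma(c) = X$, the set of points of $Z$ where $c$ is maximized is exactly $F_X$. The functional $c$ is linear in each coefficient $a_i$, so it is maximized by taking $a_i = 1$ when $c \cdot \vec{x}_i > 0$, $a_i = 0$ when $c \cdot \vec{x}_i < 0$, and $a_i$ arbitrary when $c \cdot \vec{x}_i = 0$. Hence $F_X$ is a nonempty face of $Z$ and depends only on $X$. Every nonempty face of a polytope is the maximizing face of some linear functional, so the map $X \mapsto F_X$ is surjective onto the nonempty faces. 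Order-reversal in one direction is immediate from the formula: if $X \leq Y$, then $F_Y \subseteq F_X$, because setting $a_i \in \{0,1\}$ according to $Y_i$ on the indices where $X_i = 0$ lands inside $F_X$.

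The main obstacle is injectivity, together with the converse of order-reversal. I would handle it by recovering $X$ from $F_X$.

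First, the zero set $X^0 = \{ i \mid X_i = 0 \}$ is determined by $F_X$. The linear span $L$ of the directions of $F_X$ is $\mathrm{span}\{ \vec{x}_i \mid X_i = 0 \}$. Any $c$ with $\sigma(c) = X$ vanishes on $L$, so $\vec{x}_i \in L$ forces $X_i = 0$. Thus $X^0 = \{ i \mid \vec{x}_i \in L \}$.

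Second, the centroid of $F_X$ determines $\sum_i X_i \vec{x}_i$. Writing $s = \sum_i \vec{x}_i$, one computes
\[
2\,\mathrm{centroid}(F_X) - s = \sum_i X_i \vec{x}_i .
\]

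Now suppose $X \neq Y$ are covectors with $F_X = F_Y$. Then $X$ and $Y$ have the same zero set and $\sum_i (X_i - Y_i)\vec{x}_i = 0$. Pair this with $c$ satisfying $\sigma(c) = X$. Each nonzero term $(X_i - Y_i)(c \cdot \vec{x}_i)$ equals $2|c \cdot \vec{x}_i| > 0$, so the pairing is strictly positive, a contradiction. Hence $X \mapsto F_X$ is a bijection.

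For the converse order statement, suppose $F_Y \subseteq F_X$ and take $c$ with $\sigma(c) = X$. Then $c$ is maximized on $F_Y$. If $X_i \neq 0$, then $a_i$ is forced on $F_X$, hence on $F_Y$, so $Y_i = X_i$; this gives $X \leq Y$. Finally, sending the top element to $\emptyset$ completes the order-reversing bijection of lattices.
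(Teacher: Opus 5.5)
Your proof is correct. The paper gives no argument of its own and only cites Proposition~2.2.2 of the oriented matroids reference; your identification of the nonempty faces of $Z$ with the maximizer sets $F_{\sigma(c)}$ of linear functionals is essentially the standard argument behind that result. One simplification: the zero-set and centroid step is not needed, since your last paragraph applied to $F_X = F_Y$ in both directions already gives $X \leq Y$ and $Y \leq X$, hence $X = Y$.
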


We are now ready to prove the main result of this section.

\begin{theorem}
\label{Thm:Zonotope}
The poset $P(r+1,n)$ is the face lattice of a zonotope.
\end{theorem}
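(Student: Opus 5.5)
The plan is to realize $P(r+1,n)$ as the (order-reversed) poset of nonzero covectors of a realizable oriented matroid, and then invoke Proposition~\ref{Prop:ZonotopeMatroid}. Encode a lingering word $Q = s_1 \cdots s_n$ as a sign vector $X(Q) \in \{+,-,0\}^n$ by $r_0 \mapsto +$, $r_1 \mapsto -$, $\epsilon \mapsto 0$.

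\textbf{Step 1: the combinatorial condition.} A word $Q$ lies in $P(r+1,n)$ exactly when the nonzero entries of $X(Q)$, read in order, contain an alternating subsequence of length $r+1$. Equivalently, $X(Q)$ has at least $r$ sign changes once its zeros are deleted. Under the encoding, the order on $P(r+1,n)$ is: $Q \leq Q'$ if and only if $X(Q')$ is obtained from $X(Q)$ by replacing some entries with $0$. This is the reverse of the standard conformal order on sign vectors, in which $0 < +$ and $0 < -$.

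\textbf{Step 2: identify these sign vectors as the vectors of the alternating oriented matroid.} Take $t_1 < \cdots < t_n$ and the points $\vec{y}_i = (1,t_i,\ldots,t_i^{r-1}) \in \R^r$ on the moment curve, where we may assume $n \geq r$ (otherwise $P(r+1,n)$ is empty). Let $M$ be the corresponding oriented matroid. I claim the nonzero signed vectors of $M$ are exactly the sign vectors with at least $r$ sign changes after deleting zeros. Both directions are standard.

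\emph{Vectors have at least $r$ sign changes.} Suppose $\sum \lambda_i \vec{y}_i = 0$ with $\lambda \neq 0$ but $\lambda$ has at most $r-1$ sign changes. Choose a polynomial $p$ of degree at most $r-1$ whose roots separate the sign blocks of $\lambda$, so that $\lambda_i p(t_i) \geq 0$ for all $i$, with strict inequality somewhere. Then $\sum \lambda_i p(t_i) > 0$, contradicting the dependence.

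\emph{Every such sign pattern is realized.} Given a sign pattern $\sigma$ with at least $r$ changes, pick a subset of $r+1$ positions in $\mathrm{supp}(\sigma)$ where $\sigma$ alternates. The unique dependence among those $r+1$ points alternates in sign, by a Vandermonde/Cramer computation. I would then add small multiples of further dependences, built by the same recipe, to fill in the rest of $\mathrm{supp}(\sigma)$ with the correct signs. The cleanest route is probably composition of vectors: signed vectors of an oriented matroid are closed under composition, so the prescribed pattern can be assembled from alternating circuits.

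\textbf{Step 3: dualize and conclude.} The vectors of $M$ are the covectors of the dual $M^*$. The dual is realized by a Gale dual configuration $\vec{x}_1,\ldots,\vec{x}_n \in \R^{n-r}$, namely a basis of the kernel of the $r \times n$ matrix $(\vec{y}_1 \cdots \vec{y}_n)$, read by columns. By Proposition~\ref{Prop:ZonotopeMatroid}, the face lattice of $Z(\vec{x}_1,\ldots,\vec{x}_n)$ is order-reversing isomorphic to the covector lattice of $M^*$.

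Combining this with Steps 1 and 2, the nonzero covectors correspond to the elements of $P(r+1,n)$, and the order reversal in Proposition~\ref{Prop:ZonotopeMatroid} matches the reversal noted in Step 1. The zero covector corresponds to the whole zonotope. So $P(r+1,n)$ is the poset of proper nonempty faces, which is the face lattice up to adjoining top and bottom; this is presumably the intended sense of the statement. The minimal elements of $P(r+1,n)$, the words with no $\epsilon$, correspond to vertices. The maximal elements, the words with exactly $r+1$ letters that alternate, correspond to facets.

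\textbf{Main obstacle.} I expect the hardest step to be the surjectivity half of Step 2: showing that every sign vector with at least $r$ sign changes, including those with zeros in arbitrary positions, actually arises as a vector of $M$. The plan there is the composition argument above. As a fallback, one can cite the known description of the covectors of the alternating matroid $C^{n,r}$ and of its dual, which is again alternating up to reorientation.
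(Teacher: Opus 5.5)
Your proposal is correct and follows essentially the same route as the paper: you identify $P(r+1,n)$, with the order reversed, with the nonzero signed vectors of the alternating oriented matroid (circuits are the alternating $(r+1)$-subsets, realized on the moment curve in $\R^r$), pass to the dual, which is realized in $\R^{n-r}$, and apply Proposition~\ref{Prop:ZonotopeMatroid}. You also supply details the paper only asserts, namely that the vectors are exactly the sign vectors with at least $r$ sign changes; your surjectivity step goes through once you note that every support element lies in an alternating $(r+1)$-subsequence taken from $r+1$ consecutive sign blocks, so its circuit is conformal to $\sigma$. You also correctly observe that $P(r+1,n)$ is the poset of proper nonempty faces of the zonotope.
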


\begin{proof}
Fix positive integers $r$ and $n$ with $r+1 \leq n$.  There exists an oriented matroid on $E=[n]$ of rank $n-r$, called the \emph{alternating matroid}, with signed circuits defined as follows.   For every subset $C \subseteq E$ of size $\vert C \vert = r+1$, write $C = \{ c_1, c_2, \ldots, c_{r+1} \}$ with $c_i < c_{i+1}$ for all $i$.  Then $(C^+, C^-)$ is a signed circuit of the alternating matroid, where $C^+ = \{ c_1, c_3, c_5, \ldots \}$ and $C^- = \{ c_2 , c_4, c_6 , \ldots \}$.  The set of signed vectors of the alternating matroid are the signed sets $V=(V^+,V^-)$ such that there exists a signed circuit $C=(C^+,C^-)$ with $C^+ \subseteq V^+$ and $C^- \subseteq V^-$.  Given two signed vectors $V_1 = (V_1^+ , V_1^-), V_2 = (V_2^+ , V_2^-)$, we have $V_1 \leq V_2$ if and only if $V_1^+ \subseteq V_2^+$ and $V_1^- \subseteq V_2^-)$.

Given a signed vector $V = (V^+ , V^-)$ in the alternating matroid, we define a lingering word $Q = s_1 \cdots s_n$ by
\[
s_i = \left\{ \begin{array}{ll}
r_0 & \text{if } i \in V^+ \\
r_1 & \text{if } i \in V^- \\
\epsilon &\text{otherwise. }
\end{array} \right.
\]
This defines an order-reversing bijection from the lattice of vectors of the alternating matroid to $P(r+1,n)$.

By \cite[Corollary~8.2.10]{OrientedMatroids}, the alternating matroid is realizable.  An oriented matroid is realizable if and only if its dual matroid is realizable.  Thus, there exists a set of vectors $\vec{x}_1 , \ldots , \vec{x}_n \in \mathbb{R}^{n-r}$ whose lattice of covectors is anti-isomorphic to $P(r+1,n)$.  The result then follows from Proposition~\ref{Prop:ZonotopeMatroid}.
\end{proof}

\begin{theorem}
\label{Thm:PrymZonotope}
Let $\Gamma$ be a bielliptic loop of loops and $\pi \colon \WG \to \Gamma$ the quotient by the antipodal involution.  If $r \geq 2$, then $V^r (\Gamma,\pi)$ is a union of tori, and the intersection lattice of these tori is the face lattice of a zonotope. 

In the special case where $r=2$, the statement holds for every loop of loops $\Gamma$, regardless of edge lengths.
\end{theorem}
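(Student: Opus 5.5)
The plan is to combine Theorem~\ref{Thm:PBNBielliptic}, Lemma~\ref{Lem:Contains}, and Theorem~\ref{Thm:Zonotope}.

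First, by Theorem~\ref{Thm:PBNBielliptic}, for $r \geq 2$ we have $V^r(\Gamma,\pi) = \bigcup T(Q)$. The union is over $Q \in P(r+1,g-1)$, and each $T(Q)$ is a real torus of dimension $\#\{i \mid s_i = \epsilon\}$ by \cite[Lemma~4.7]{PBN}. This gives the first assertion. The same theorem covers arbitrary edge lengths when $r=2$, so the remaining argument applies verbatim in that case.

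Next, I identify the intersection lattice of these tori with $P(r+1,g-1)$. The intersection lattice consists of all nonempty intersections of the tori $T(Q)$, ordered by inclusion. By Lemma~\ref{Lem:Contains}, any nonempty pairwise intersection is again a torus of the same type: $T(Q)\cap T(Q') = T(Q\wedge Q')$. By induction, any nonempty finite intersection equals $T$ of the meet of the indices. Hence the collection of tori is closed under nonempty intersections. Moreover, the assignment $Q \mapsto T(Q)$ is injective and satisfies $T(Q)\subseteq T(Q')$ iff $Q\le Q'$. Injectivity follows from this containment criterion together with antisymmetry of $\leq$. So the intersection lattice is isomorphic as a poset to $P(r+1,g-1)$.

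Finally, apply Theorem~\ref{Thm:Zonotope} with $n=g-1$. That theorem says $P(r+1,g-1)$ is the face lattice of a zonotope, which concludes the proof.

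The main obstacle is bookkeeping about which lattice is meant. If the intersection lattice is to include the empty intersection as a bottom element, this should match the empty face of the zonotope. It also has to be checked that the order direction agrees: under the order-reversing bijections in the proof of Theorem~\ref{Thm:Zonotope}, larger $Q$ (more $\epsilon$'s) corresponds to larger tori and to larger faces. I would confirm the direction by noting that the torus dimension $\#\{i \mid s_i=\epsilon\}$ matches the face dimension. Circuits of the alternating matroid correspond to words with exactly $r+1$ non-$\epsilon$ letters, giving the maximal tori of dimension $g-1-(r+1)$. This agrees with the dimension $n-r-1$ of facets of a zonotope in $\R^{n-r}$. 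If the degenerate case $g-1 < r+1$ arises, $V^r$ is empty, and the statement holds trivially.
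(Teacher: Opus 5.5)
Your proposal is correct and is essentially the paper's own proof: Theorem~\ref{Thm:PBNBielliptic} gives the decomposition into tori, Lemma~\ref{Lem:Contains} identifies their intersection lattice with $P(r+1,g-1)$, and Theorem~\ref{Thm:Zonotope} finishes. Your extra bookkeeping on order direction and dimensions is more careful than the paper's, with one caveat: $P(r+1,g-1)$ corresponds only to the \emph{nonzero} covectors, so it is really the poset of proper nonempty faces (the zonotope itself, i.e.\ the all-$\epsilon$ word, has no torus), which is consistent with the order complex being a $(g-r-2)$-sphere as in Example~\ref{Ex:LowDim}.
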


\begin{proof}
Theorem~\ref{Thm:PBNBielliptic} shows that $V^r (\Gamma,\pi)$ is a union of tori, and Lemma~\ref{Lem:Contains} shows that the intersection lattice of these tori is $P(r+1,g-1)$.  The result then follows from Theorem~\ref{Thm:Zonotope}.
\end{proof}

\begin{example}
\label{Ex:LowDim}
A consequence of Theorem~\ref{Thm:Zonotope} is that the order complex of $P(r+1,g-1)$ is homeomorphic to a sphere of dimension $g-r-2$.  Thus, when $g-r-2=0$, the tropical Prym-Brill-Noether variety $V^r (\Gamma,\pi)$ consists of 2 points.  When $g-r-2=1$, the poset $P(r+1,g-1)$ is the face poset of a graph with $2g-2$ vertices and $2g-2$ edges, arranged in a cycle.  Thus, in this case, the Prym-Brill-Noether variety $V^r (\Gamma,\pi)$ is itself a loop of loops of genus $2g-1$.  One can check that it is isomorphic to the metric graph obtained from $\WG$ by contracting the edge from $w_{i-1}$ to $v_i$ for all $i$.  (The Prym-Brill-Noether variety of the cover $\pi \colon \WG \to \Gamma$ is independent of the lengths of these edges, so one cannot expect it to be isomorphic to $\WG$ itself.)  By Lemma~\ref{Lem:Irreducible}, this is analogous to the classical case.  Specifically, when $g(C_1)=1$ and $g-r-2=0$, the classical Prym-Brill-Noether variety $V^r (C,f)$ consists of 2 points, and when $g-r-2=1$, the classical Prym-Brill-Noether variety is isomorphic to $\WC$.
\end{example}

\begin{example}
\label{Ex:RhombicIcosahedron}
We illustrate Theorem~\ref{Thm:PrymZonotope} in the first case where the Prym-Brill-Noether variety is two-dimensional, when $g=6$ and $r=2$.  In this case, $V^2 (\Gamma,\pi)$ is a union of tori $T(Q)$, each of dimension 2, 1, or 0.  The poset $P(3,5)$ is the face lattice of the \emph{rhombic icosahedron}, pictured in Figure~\ref{Fig:RhombicIcosahedron}.  In the figure, each of the 20 2-dimensional faces is labeled with a lingering word $Q$ in the symbols 0, 1, and $\epsilon$, corresponding to the 2-dimensional torus $T(Q)$.  When the faces labeled $Q$ and $Q'$ meet along an edge, it means that the intersection $T(Q) \cap T(Q')$ is a circle.  Similarly, the 22 vertices, which correspond to the 22 words of length 5 in the symbols 0 and 1 that contain a length-3 alternating subword, also correspond to points where two or more of the 1-dimensional tori intersect.

 \begin{figure}[h]
\begin{center}

\tikz[scale=0.75]{
 \node (v1) at (0,0)   {$\bullet$};
  \node (v2) at (-1.5,-2) {$\bullet$};
  \node (v3) at (1.5,-2)  {$\bullet$};
  \node (v4) at (0,-4)  {$\bullet$};
\node (v5) at (-8,-5)  {$\bullet$};
\node (v6) at (-4,-4)  {$\bullet$};
\node (v7) at (-2,-4)  {$\bullet$};
\node (v8) at (2,-4)  {$\bullet$};
\node (v9) at (4,-4)  {$\bullet$};
\node (v10) at (8,-5)  {$\bullet$};
\node (v11) at (-4,-6)  {$\bullet$};
\node (v12) at (-2,-6)  {$\bullet$};
\node (v13) at (0,-6)  {$\bullet$}; 
\node (v14) at (2,-6)  {$\bullet$};
\node (v15) at (4,-6)  {$\bullet$};
\node (v16) at (-5,-7)  {$\bullet$};
\node (v17) at (-2,-9)  {$\bullet$};
\node (v18) at (0,-8)  {$\bullet$};
\node (v19) at (2,-9)  {$\bullet$};
\node (v20) at (5,-7)  {$\bullet$};
\node (v21) at (0,-13)  {$\bullet$};
\node (v22) at (0,-15)  {$\bullet$};
  \graph { (v1) -- (v2) -- (v4) -- (v3) -- (v1)
   };
   \graph { (v1) -- (v5) -- (v22) -- (v10) -- (v1)
   };
  \graph { (v2) -- (v6) -- (v7) -- (v4) -- (v8) -- (v9) -- (v3)
   };
  \graph { (v6) -- (v5) -- (v16) -- (v11) -- (v12) -- (v13) -- (v14) -- (v15) -- (v20) -- (v10) -- (v9)
   };
  \graph { (v11) -- (v17) -- (v18) -- (v12)
   };
  \graph { (v15) -- (v19) -- (v18) -- (v14)
   };
  \graph { (v17) -- (v21) -- (v19)
   };
  \graph { (v16) -- (v21) -- (v20)
   };
  \graph { (v11) -- (v6)
   };
  \graph { (v12) -- (v7)
   };
  \graph { (v13) -- (v4)
   };
  \graph { (v14) -- (v8)
   };
  \graph { (v15) -- (v9)
   };
  \graph { (v21) -- (v22)
   };

   \node (f1) at (0,-2) {$\epsilon101\epsilon$};
   \node[rotate = 33] (f2) at (-4,-3.1) {$\epsilon10\epsilon1$};
   \node[rotate = 327] (f3) at (4,-3.1) {$1\epsilon01\epsilon$};
   \node (f4) at (-1.7,-3.2) {$010\epsilon\epsilon$};
   \node (f5) at (1.7,-3.2) {$\epsilon\epsilon010$};
   \node (f6) at (-5.5,-5.3) {$\epsilon1\epsilon01$};
   \node (f7) at (-3,-5) {$01\epsilon0\epsilon$};
   \node (f8) at (-1,-5) {$01\epsilon\epsilon0$};
   \node (f9) at (1,-5) {$0\epsilon\epsilon10$};
   \node (f10) at (3,-5) {$\epsilon0\epsilon10$};
   \node (f11) at (5.5,-5.3) {$10\epsilon1\epsilon$};
   \node[rotate = 310] (f12) at (-4,-9) {$1\epsilon\epsilon01$};
   \node[rotate = 50] (f13) at (4,-9) {$10\epsilon\epsilon1$};
   \node[rotate = 310] (f14) at (-3.5,-8) {$\epsilon\epsilon101$};
    \node[rotate = 50] (f15) at (3.5,-8) {$101\epsilon\epsilon$};
    \node (f16) at (-2,-7.5) {$0\epsilon10\epsilon$};
  \node (f17) at (2,-7.5) {$\epsilon01\epsilon0$};
\node (f18) at (0,-6.8) {$0\epsilon1\epsilon0$};
\node (f19) at (0,-10.5) {$\epsilon010\epsilon$};
\node (f20) at (6,-1) {$1\epsilon0\epsilon1$};

} 
\end{center}
\caption{A rhombic icosahedron with faces labeled by lingering words}
\label{Fig:RhombicIcosahedron}
\end{figure}
\end{example}

\section{Proof of the Lifting Theorem}
\label{Sec:Lifting}

In this section, we prove Theorem~\ref{Thm:Lifting}.  A definition of \emph{vertex avoiding} divisor classes $[D] \in V^r (\Gamma,\pi)$ is provided in \cite[Definition~5.2]{PBN}.  When $\Gamma$ is bielliptic, this definition can be simplified.  

\begin{definition}
\label{Def:VA}
Let $\Gamma$ be a bielliptic loop of loops, let $\pi \colon \WG \to \Gamma$ the quotient by the antipodal involution, and let $Q = s_1 \cdots s_{g-1} \in P(r+1,g-1)$ be a word with $\# \{ j \mid s_j = \epsilon \} = g-2-r$.  A divisor $[D] \in T(Q)$ is \emph{vertex avoiding} if, for all $i$ such that $s_i = \epsilon$, the chip of the orientable representative of $[D]$ on the $i$th loop is neither $v_i$ nor $w_i$.
\end{definition}

We first require a tropical analogue of Lemma~\ref{Lem:Intersection}.

\begin{lemma}
\label{Lem:TropIntersection}
Let $\Gamma$ be a bielliptic loop of loops and $\pi \colon \WG \to \Gamma$ the quotient by the antipodal involution.  For $r \geq 2$, there exist divisors $D_1 , \ldots , D_{g-2-r} \in P^r (\Gamma,\pi) - P^1 (\Gamma,\pi)$ such that
\[
X = \bigcap_{i=1}^{g-2-r} [V^1 (\Gamma,\pi) + D_i ] \cap V^r (\Gamma,\pi)
\]
is finite and
\[
\# X = 2 \cdot \frac{(g-1)!}{(r+1)!}.
\]
Moreover, if $[D] \in V^r (\Gamma,\pi)$ is vertex avoiding, then the divisors $D_i$ can be chosen to so that $[D] \in X$.
\end{lemma}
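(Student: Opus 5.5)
The plan is to make everything explicit in the coordinates of $\mathbb{T}^0$ and then count by combinatorics.

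\textbf{Coordinates.} A class in $P^0$ is determined by its orientable representative. By the symmetry condition, that representative is in turn determined by the positions of its chips on $\gamma_1,\ldots,\gamma_{g-1}$. This identifies $P^0$ with a real torus $\prod_{i=1}^{g-1}\gamma_i$, and $P^1$ is a translate of it. In these coordinates, Theorem~\ref{Thm:PBNBielliptic} says that $T(Q)$ is the coordinate subtorus where the coordinates with $s_i\neq\epsilon$ are fixed at $x^r_{i,j}$.

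I will apply the same theorem with $r=1$, valid in the bielliptic case (or use the $r=1$ description from \cite{PBN}). Then $V^1(\Gamma,\pi)$ is a union of tori $T(Q')$ with $Q'\in P(2,g-1)$. Its top-dimensional pieces are codimension one, and are exactly the tori where two coordinates $i<k$ are fixed at the points $x^1_{i,j}$, $x^1_{k,1-j}$. Translating by $D_i$ shifts every coordinate.

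\textbf{Choice of translates.} I will pick the $D_i$ so that each translate $V^1+D_i$ restricts, on each $(g-2-r)$-dimensional torus $T(Q)$ (with $Q$ having exactly $g-2-r$ entries $\epsilon$), to a union of codimension-one subtori of the free coordinates.

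\textbf{Transversality and counting.} I expect that for generic translation vectors:
\begin{itemize}
\item $X$ meets only the top-dimensional tori of $V^r$, at their interiors (vertex avoiding points);
\item every intersection is transverse, so $X$ is finite.
\end{itemize}
The count then reduces to a sum over maximal faces of the zonotope of Theorem~\ref{Thm:Zonotope}. For each word $Q\in P(r+1,g-1)$ with $g-2-r$ epsilons, I count the points of $T(Q)$ lying on all translates. Each translate contributes hyperplanes $\{z_k = c\}$ in the free coordinates, fixed by the conditions at two positions. A point of $T(Q)$ is then cut out by choosing, for each of the $g-2-r$ translates, which free coordinate it pins down.

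A cleaner route is to exploit the combinatorics. The number of top words is $2\binom{g-1}{r+1}$: choose $r+1$ non-$\epsilon$ positions, then one of two alternating patterns. Transversality then requires an assignment of the $g-2-r$ translates bijectively to the free coordinates, giving $(g-2-r)!$ per face. This would yield
\[
2\binom{g-1}{r+1}(g-2-r)!=2\frac{(g-1)!}{(r+1)!}.
\]
So I would aim to choose the $D_i$ so that the $i$th translate meets each top torus in exactly one point for each free coordinate it can fix, consistently with the signs. The cleanest way to arrange this is to take the $D_i$ in $P^{r}-P^1$ with well-separated generic coordinates. The parity bookkeeping is what guarantees that the two fixed coordinates of the translated $V^1$ land compatibly with $x^r$.

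\textbf{Vertex avoiding clause.} Given a vertex avoiding $[D]\in T(Q)$, its free coordinates lie in the interiors of edges. I can choose the translation parameters of the $D_i$ so that, for a prescribed bijection of translates to free coordinates, the $i$th translate passes through $[D]$, while the other parameters stay generic. The vertex avoiding hypothesis is what keeps the intersection at $[D]$ transverse and stops it degenerating onto a lower-dimensional face.

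\textbf{Main obstacle.} The hardest step is the precise bookkeeping of where the translated $V^1$ tori sit relative to the points $x^r_{i,j}$, with their parity conditions. This is needed to show that the translates never meet lower-dimensional tori or each other non-transversely. It is also needed to show that the count is exactly $(g-2-r)!$ per face rather than some other multiplicity. I would first test the case $g-2-r=1$, where $V^r$ is a loop of loops by Example~\ref{Ex:LowDim} and a single translate should meet each of the $2g-2$ loops once, and then induct on $g-2-r$, intersecting one translate at a time.
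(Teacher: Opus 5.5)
Your skeleton matches the paper's proof, but your description of $V^1(\Gamma,\pi)$ is wrong, and the ``main obstacle'' you leave open comes from that error. The shared skeleton is: there are $2\binom{g-1}{r+1}$ maximal tori $T(Q)$, each meets $g-2-r$ general translates of $V^1(\Gamma,\pi)$ in $(g-2-r)!$ points, and a vertex-avoiding $[D]$ can be captured because it lies in a unique maximal torus.

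Theorem~\ref{Thm:PBNBielliptic} is stated only for $r\ge 2$, and its formula genuinely fails at $r=1$. The tori it would produce have two coordinates fixed, at $x^1_{i,j}$ and $x^1_{k,1-j}$, so they have codimension two, not one. Then $g-2-r\ge 1$ generic translates of such a set miss every $(g-2-r)$-dimensional $T(Q)$, giving $X=\emptyset$. Your counting step tacitly uses the correct picture of hyperplanes $\{z_k=c\}$, but the description you propose to derive it from contradicts it.

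The fact the paper actually uses is that $V^1(\Gamma,\pi)$, the tropical Prym theta divisor, is a union of translates of the $g-1$ codimension-one coordinate tori $\{z_k=a_k\}$, one for each $k$. This reflects that having rank at least one is a single condition in \cite{PBN}: the relevant representative has a chip at some $v_j$ (cf.\ \cite[Lemma~3.5]{PBN}).

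With that description, no bookkeeping against the points $x^r_{i,j}$ is needed. Write $V^1(\Gamma,\pi)+D_i=\bigcup_k\{z_k=c_{i,k}\}$. For generic $D_i$, the hyperplanes with $k$ a fixed (non-$\epsilon$) position of $Q$ miss $T(Q)$, because $c_{i,k}\neq x^r_{k,j}$. So nothing should ``land compatibly'' with $x^r$; genericity means exactly that it does not.

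Hence $(V^1+D_i)\cap T(Q)$ is the union of the hyperplanes in the $g-2-r$ free directions. The intersection over all $i$ consists of the points with $z_{\sigma(i)}=c_{i,\sigma(i)}$, where $\sigma$ runs over bijections from translates to free coordinates. That gives exactly $(g-2-r)!$ points, each with all free coordinates in edge interiors. Such a point lies in no other maximal torus: another $T(Q')$ either has the same free set and the opposite alternating fill (so it is disjoint from $T(Q)$), or it fixes one of these coordinates at $v_k$ or $w_k$.

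For the vertex-avoiding clause, impose $c_{i,\sigma_0(i)}=z_{\sigma_0(i)}(D)$ for one bijection $\sigma_0$ and keep the remaining $c_{i,k}$ generic. At most one translate is constrained in each coordinate, so finiteness and the count are unaffected.
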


\begin{proof}
By Theorem~\ref{Thm:PBNBielliptic}, $V^r (\Gamma,\pi)$ is a union of tori $T(Q)$ corresponding to words in $P(r+1,g-1)$, and by Lemma~\ref{Lem:Contains}, the maximal tori with respect to containment correspond to words with $\# \{ j \mid s_j = \epsilon \} = g-2-r$.  Each such word is obtained by choosing a subset of $[g-1]$ of size $g-2-r$ to fill with $\epsilon$'s, and filling the remaining entries with one of the two alternating substrings of length $r+1$.  It follows that the number of maximal tori in this union is exactly $2 \cdot {{g-1}\choose{g-2-r}}$.

Now, $V^1 (\Gamma,\pi)$ is a union of translates of the $g-1$ codimension-1 coordinate tori.  Each maximal torus $T(Q)$ therefore intersects $g-2-r$ general translates of $V^1 (\Gamma,\pi)$ in $(g-2-r)!$ points.  It follows that the intersection of $V^r (\Gamma,\pi)$ with these translates has cardinality
\[
2 \cdot {{g-1}\choose{g-2-r}} \cdot (g-2-r)! = 2 \cdot \frac{(g-1)!}{(r+1)!}.
\]

These translates can be chosen to contain a particular point $[D] \in V^r (\Gamma,\pi)$ if $[D]$ lies in exactly one maximal torus $T(Q)$, which holds if $[D]$ is vertex avoiding.
\end{proof}

Next, we show that $V^r (\Gamma,\pi)$ is a local complete intersection of translates of $V^1 (\Gamma,\pi)$.

\begin{proposition}
\label{Prop:LCI}
Let $\Gamma$ be a bielliptic loop of loops and $\pi \colon \WG \to \Gamma$ the quotient by the antipodal involution.  Let $r \geq2$ and let $[D] \in V^r (\Gamma,\pi)$ be vertex avoiding.  There exists an open neighborhood $U$ of $[D]$ and effective divisors $E_1 , \ldots , E_{r+1}$ of degree $r-1$ such that
\[
V^r (\Gamma,\pi) \cap U = \Big( \bigcap_{i=1}^{r+1} V^1 (\Gamma,\pi) + E_i - \iota^* E_i \Big) \cap U .
\]
\end{proposition}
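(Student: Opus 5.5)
We will work in coordinates adapted to the orientable representatives.  Write $Q = s_1 \cdots s_{g-1}$ for the unique word with $g-2-r$ entries equal to $\epsilon$ such that $[D] \in T(Q)$; this is unique because $[D]$ is vertex avoiding.  Let $i_1 < \cdots < i_{r+1}$ be the indices with $s_{i_k} \neq \epsilon$.  Near $[D]$, a Prym class is determined by the positions of its chips on the loops $\gamma_1, \ldots, \gamma_{g-1}$, since the chips on the other half are forced by the Prym condition.  This gives local coordinates $(t_1, \ldots, t_{g-1})$ on a neighborhood $U$ of $[D]$ in $P^r$, where $t_i$ is the chip position on $\gamma_i$.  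We will shrink $U$ so that every chip stays away from $v_i, w_i$ for $s_i = \epsilon$, and stays within a small arc around $x^r_{i_k,j}$ for $s_{i_k} = r_j$.

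In these coordinates, $V^r (\Gamma,\pi) \cap U$ is the codimension-$(r+1)$ coordinate subtorus $\{ t_{i_k} = x^r_{i_k,j_k} \text{ for } k=1,\ldots,r+1\}$.  Indeed, by Theorem~\ref{Thm:PBNBielliptic} and Lemma~\ref{Lem:Contains}, any other torus $T(Q')$ meeting $U$ must be contained in $T(Q)$.  Any torus $T(Q')$ with $Q' \not\leq Q$ would force some $\epsilon$-coordinate to lie at a vertex, or some $i_k$-coordinate to lie at the other vertex, and both are excluded by the choice of $U$.

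Next we describe $V^1$.  By the description used in the proof of Lemma~\ref{Lem:TropIntersection}, $V^1 (\Gamma,\pi)$ is a union of codimension-one coordinate tori; these are the words of $P(2,g-1)$ with a single non-$\epsilon$ letter.  Translating by $E - \iota^* E$, where $E$ is effective of degree $r-1$ (the degree shift $r-1$ matches the parity change from $P^1$ to $P^r$), moves these coordinate hyperplanes.  Using the chip-firing description of orientable representatives from \cite{PBN}, we must choose the chips of $E_k$ so that the translated component $\{ t_{i_k} = \text{const} \}$ passes through $x^r_{i_k,j_k}$, while every other component of $V^1 + E_k - \iota^* E_k$ misses $U$.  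The natural first attempt is to place the $r-1$ chips of $E_k$ at vertices $v_i$ or $w_i$, roughly one on each loop $\gamma_{i_l}$ with $l \neq k$.  Translating by $v - \iota^* v$ or $w - \iota^* w$ then shifts the relevant coordinate by half a loop or toggles the vertex, in the pattern that produces the alternation defining $x^r_{i,j}$.  With these choices, $(V^1 + E_k - \iota^* E_k) \cap U = \{ t_{i_k} = x^r_{i_k,j_k} \} \cap U$, and intersecting over $k$ gives exactly $V^r \cap U$.

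The main obstacle is the last step: computing explicitly the orientable representative of $D' + E - \iota^*E$ for $[D'] \in V^1$, and showing that exactly one translated component passes through $U$.  This requires tracking how chips move around the big cycle under the Prym translation.  We will use the dihedral word bookkeeping of \cite{PBN}: adding $E - \iota^* E$ acts on lingering words by a shift.  We will choose $E_k$ so that the single letter of a $P(2,g-1)$ word lands at position $i_k$ with the correct letter $r_{j_k}$.
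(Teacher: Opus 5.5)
Your reduction and target are right: near a vertex-avoiding $[D]\in T(Q)$ one has $V^r(\Gamma,\pi)\cap U=T(Q)\cap U$. So it suffices that each translate $V^1(\Gamma,\pi)+E_k-\iota^*E_k$, near $[D]$, lies in the locus where the chip on one loop $\gamma_{i_k}$ stays where $D$ has it, with $i_k$ running over $A=\{i: s_i\neq\epsilon\}$. But that is the whole content of the proposition. You leave it as a plan, and the plan would not work as written. There are four problems.

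\begin{enumerate}
\item \emph{No description of $V^1$.} Words in $P(2,g-1)$ contain an alternating subword of length $2$, so they have at least two non-$\epsilon$ letters, and Theorem~\ref{Thm:PBNBielliptic} is stated only for $r\ge 2$. What is needed is \cite[Lemma~3.5]{PBN}: if a Prym class has rank at least one, its orientable representative has a chip at some $v_j$.
\item \emph{No ``shift of lingering words''.} Adding $\pm(p-\iota^*p)$, with $p$ on $\gamma_j$, fixes the chips on the loops on one side of $\gamma_j$ and moves those on the other side to their antipodal points; which side depends on the sign and the parity of $r$. The chip on $\gamma_j$ itself is moved to a point depending on $p$.
\item \emph{Wrong count.} Your $E_k$ would have one chip on each of the $r$ loops $\gamma_{i_l}$, $l\neq k$, but it has only $r-1$ chips. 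So one further loop of $A$ must be left out, and you must guarantee its chip lands at a $w$-vertex, not a $v$-vertex.
\item \emph{Vertex placement.} Putting the chips of $E_k$ at vertices gives no control over where the chips on those loops land. The argument needs every loop other than $\gamma_{i_k}$ to end up with its chip away from the $v$-vertices.
\end{enumerate}

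The paper's construction handles all of this. For $i\in A$, take $i'\in A$ adjacent to $i$. Let $E_i$ consist of one point for each $j\in A\smallsetminus\{i,i'\}$, namely $p_j$ or $\iota^*p_j$ with $p_j$ general on $\gamma_j$; this gives exactly $r-1$ points. No such $j$ lies between $i$ and $i'$, so $\gamma_i$ and $\gamma_{i'}$ are always flipped together. Their chips in $D$ (one at a $v$, one at a $w$, by the alternation in $Q$) therefore keep opposite types. A parity balance in choosing $p_j$ versus $\iota^*p_j$ places them at $v_i$ and $w_{i'}$. The general points push the chips on $\gamma_j$, $j\in A\smallsetminus\{i,i'\}$, off the vertices. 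Chips on $\epsilon$-loops stay off them, because $v_j$ and $w_j$ are antipodal. Hence for $[D']\in U$ in the translate, the orientable representative of $D'-E_i+\iota^*E_i$ has near-vertex chips only near $v_i$ and $w_{i'}$. Then \cite[Lemma~3.5]{PBN} forces a chip exactly at $v_i$, i.e.\ $D'$ agrees with $D$ on $\gamma_i$. The reverse inclusion is just the rank count, since $\deg E_i=r-1$. So you only need each translate to be contained in the relevant hypersurface near $[D]$, not equal to it.
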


\begin{proof}
Let $[D'] \in P^r (\Gamma,\pi)$, and let $D' \in \vert D' \vert$ be its orientable representative.  Let $i \in [g-1]$ and let $p_i$ be a general point on the $i$th loop.  The key to this argument is to compute the orientable representative of $[D' - p_i + \iota^* p_i]$.  There are 4 cases:
\begin{enumerate}
\item  If $r$ is even, then the orientable representative of $[D' - p_i + \iota^* p_i]$ has a chip on the same point of the $j$th loop as $D'$ for all $j > i$, and a chip on the antipodal point of the $j$th loop from $D'$ for all $j<i$.  
\item  If $r$ is odd, then the orientable representative of $[D' - p_i + \iota^* p_i]$ has a chip on the same point of the $j$th loop as $D'$ for all $j < i$, and a chip on the antipodal point of the $j$th loop from $D'$ for all $j>i$.
\item  If $r$ is even, then the orientable representative of $[D' + p_i - \iota^* p_i]$ has a chip on the same point of the $j$th loop as $D'$ for all $j < i$, and a chip on the antipodal point of the $j$th loop from $D'$ for all $j>i$.  
\item  If $r$ is odd, then the orientable representative of $[D' + p_i - \iota^* p_i]$ has a chip on the same point of the $j$th loop as $D'$ for all $j > i$, and a chip on the antipodal point of the $j$th loop from $D'$ for all $j<i$.
\end{enumerate}

Now, let $Q = s_1 \cdots s_{g-1} \in P(r+1,g-1)$ and let $A = \{ i \in [g-1] \mid s_i \neq \epsilon \}$.  Suppose that $\# A = r+1$ and $[D] \in T(Q)$.  For $i \in A$, let $i'$ be either the largest element of $A$ that is smaller than $i$, or the smallest element of $A$ that is larger than $i$.  (Since $\# A \geq 2$, there always exists an element $i' \in A$ that is not equal to $i$.)  Now, partition $A \smallsetminus \{ i,i' \}$ into sets $B_i$ and $B'_i$ such that:
\begin{enumerate}
\item  if $r$ is even and the orientable representative $D$ of $[D]$ has a chip at $v_i$, or $r$ is odd and $D$ has a chip at $w_i$, then $\# \{ b \in B_i \mid b < i,i' \}$ is equal to either $\# \{ b \in B'_i \mid b < i,i' \}$ or $\# \{ b \in B'_i \mid b < i,i' \} + 1$ and $\# \{ b \in B_i \mid b > i,i' \}$ is equal to either $\# \{ b \in B'_i \mid b > i,i' \}$ or $\# \{ b \in B'_i \mid b > i,i' \} - 1$. 
\item  if $r$ is even and $D$ has a chip at $w_i$, or $r$ is odd and $D$ has a chip at $v_i$, then $\# \{ b \in B_i \mid b < i,i' \}$ is equal to either $\# \{ b \in B'_i \mid b < i,i' \}$ or $\# \{ b \in B'_i \mid b < i,i' \} - 1$ and $\# \{ b \in B_i \mid b > i,i' \}$ is equal to either $\# \{ b \in B'_i \mid b > i,i' \}$ or $\# \{ b \in B'_i \mid b > i,i' \} + 1$.
\end{enumerate}
Let
\[
E_i = \sum_{j \in B_i} p_j + \sum_{j \in B'_i} \iota^* p_j .
\]
Our goal is to show that
\[
V^r (\Gamma,\pi) \cap U = \Big( \bigcap_{i=1}^{r+1} V^1 (\Gamma,\pi) + E_i - \overline{E}_i \Big) \cap U .
\]
By the definition of rank, the lefthand side is contained in the righthand side.  It therefore suffices to show the reverse containment.

Let $[D'] \in (V^1 (\Gamma,\pi) + E_i - \iota^* E_i) \cap U$, and let $D'_i$ be the orientable representative of $[D' - E_i + \iota^* E_i]$.  Since $[D'] \in U$ and $U$ is sufficiently small, it follows by induction, using (1)-(4) above, that $D'_i$ has a chip in a small neighborhood of $v_i$, and this chip is at $v_i$ if and only if $D'$ has a chip on the same point of the $i$th loop as $D$.  Similarly, $D'_i$ has a chip in a small neighborhood of $w_{i'}$, and a chip on the $j$th loop at a point distinct from $v_j$ or $w_j$ for all $j \notin \{ i,i' \}$.  By \cite[Lemma~3.5]{PBN}, since $D'_i$ has rank at least one, there exists $j \in [g-1]$ such that $D'_i$  has a chip at $v_j$.  By the above, we see that $j$ must be equal to $i$, hence $D'$ has a chip on the same point of the $i$th loop as $D$.  

Now, if $[D'] \in \cap_{i=1}^{r+1} (V^1 (\Gamma,\pi) + E_i - \iota^* E_i) \cap U$, it follows that $D'$ has a chip on the same point of the $i$th loop as $D$ for all $i$ such that $s_i \neq \epsilon$.  In other words, $[D'] \in T(Q)$, and the result follows.
\end{proof}

Before proceeding to the proof of Theorem~\ref{Thm:Lifting}, we demonstrate that it is not vacuous.

\begin{lemma}
\label{Lem:Nonvacuous}
Let $\Gamma$ be a bielliptic loop of loops of genus $g$ and $\pi \colon \WG \to \Gamma$ the quotient by the antipodal involution.  Then there exists a bielliptic curve $C$ of genus $g$ and an \'{e}tale double cover $f \colon \WC \to C$ specializing to $\pi$.  Moreover, the diagram \eqref{eq:tropical} lifts to the diagram \eqref{eq:classical}.  In particular, $g (C_1) = 1$.
\end{lemma}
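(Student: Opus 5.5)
The plan is to construct the covers from the bottom of the diagram \eqref{eq:tropical} upward, lifting harmonic morphisms of metric graphs to morphisms of curves. Start with $\Sigma$, a single loop, and take a Tate elliptic curve $E$ over $K$ whose minimal skeleton is $\Sigma$; its valuation of the $j$-invariant is chosen so that the circumference matches that of $\Sigma$. Next we lift $\psi_1 \colon \widetilde{\Sigma} \to \Sigma$ and $\psi_2 \colon \Gamma \to \Sigma$ to degree-2 covers $p_1 \colon C_1 \to E$ and $p_2 \colon C_2 \to E$, using the lifting theorem for harmonic morphisms \cite{ABBR15a}.

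The map $\psi_1$ is a free (unramified) double cover of a loop by a loop of twice the length. It lifts to an isogeny $C_1 \to E$ of Tate curves, namely the cover corresponding to the subgroup $q^{\Z} \subset q^{\Z/2}$; in particular $g(C_1)=1$. The map $\psi_2$ is harmonic of degree 2 and ramified along the contracted bridges, or equivalently over the images of the loops $\gamma_i$. Odd residue characteristic guarantees that the local tame lifting conditions of \cite{ABBR15a} are satisfied, so the branch points (with their ramification profiles over $\Sigma$) can be placed to give a double cover $p_2 \colon C_2 \to E$ with skeleton $\Gamma$. This is a bielliptic curve of genus $g$.

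Then we form $\WC$ as the normalization of the fiber product $C_1 \times_E C_2$. Since $p_1$ is étale, the induced map $\WC \to C_2$ is étale. Its Galois group over $E$ is $\Z/2\Z\times\Z/2\Z$, and the third quotient gives $C$ together with $f \colon \WC \to C$. We also check that $f$ is étale, by verifying that the branch loci of $p_1$ (empty) and $p_2$ are disjoint, and that $C \to E$ has the same branch data as $p_2$.

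On skeleta, the tropicalization of a fiber product of a free cover with a harmonic morphism is the fiber product of the graphs, by compatibility of tropicalization with fiber products of tame covers, as in \cite{ABBR15a} or \cite{LenUlirschZakharov}. The fiber product $\widetilde{\Sigma} \times_\Sigma \Gamma$ is exactly $\WG$ with the three involutions of \eqref{eq:tropical}. This shows that $f$ specializes to $\pi$ and that \eqref{eq:classical} lifts \eqref{eq:tropical}. Since $g(C_1)=1$ by construction, the final claim follows.

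The main obstacle is making sure the third quotient $C$ has skeleton $\Gamma$ and not $\Gamma$ twisted by the wrong antipodal identification; that is, the unramified class $C_1\to E$ must restrict to the correct $\Z/2$-torsor on $\Gamma$. I would handle this by choosing the isogeny $C_1\to E$ to be the one corresponding to the nontrivial element of $H^1(\Sigma,\Z/2)$. Its pullback to $\Gamma$ under $\psi_2$ is then the class defining $\pi$. This can be checked directly from the description of $\WG$ as having antipodal involution over the cycle of $\Gamma$.
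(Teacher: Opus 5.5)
Your proposal is correct, but it runs in the opposite direction from the paper. The paper lifts $\psi$ to some $p\colon C\to E$ via \cite[Lemma~7.15]{ABBR15a} and takes the unique \'etale double cover $f$ specializing to $\pi$ from \cite[Lemma~5.9]{JensenLen18}. It then has to \emph{prove} that $p\circ f$ is Galois. By \cite[Corollary~4.3]{Naranjo} this is the condition $\tau^*\eta=\eta$, which follows from uniqueness of the lift, because the sheet swap of $\psi$ carries $\pi$ to an isomorphic cover. Debarre's description of the Galois group then gives the Klein four-group, which specializes to $\{\iota,\iota_1,\iota_2\}$.

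You instead build the Galois closure directly as $C_1\times_E C_2$, with $C_1\to E$ the topological isogeny. The $\Z/2\Z\times\Z/2\Z$ action, the \'etaleness of $f$, and $g(C_1)=1$ are then automatic. The only tropical input beyond lifting $\psi_2$ is that pulling back the topological covering $C_1^{\mathrm{an}}\to E^{\mathrm{an}}$ along $p_2^{\mathrm{an}}$ gives the topological covering of $C_2^{\mathrm{an}}$ with class $\psi_2^*[\psi_1]$. So your route avoids Naranjo's criterion, Debarre's classification, and the uniqueness lemma.

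What the paper's route buys is generality. It shows that for an \emph{arbitrary} lift $p$ of $\psi$, the cover $f$ lifting $\pi$ is automatically Galois over $E$ with $g(C_1)=1$. That ``every lift'' form is what the proof of Theorem~\ref{Thm:Lifting} implicitly uses when it cites this lemma to get $g(C_1)=1$ for the given $f$. Your construction only exhibits one such cover, which suffices for the existence statement as written.

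A few things to tidy:
\begin{itemize}
\item Your final check computes the class of the base change $\WC\to C_2$, i.e.\ of $f_2$ (tropically $\pi_2$), not of $f$. To conclude for $f$, you need two facts. First, $\sigma_1\times\sigma_2$ tropicalizes to the other free involution of $\widetilde\Sigma\times_\Sigma\Gamma$. Second, the skeleton of a quotient is the quotient of an invariant skeleton. This is harmless: the two free involutions of $\WG$ are conjugate (swap the two edges of each loop in one half of $\WG$), so $\pi\cong\pi_2$.
\item $\psi_2$ does not contract the bridges; it has dilation $2$ on them. The branch points (legs) sit over the images of the $v_i,w_i$, not over the images of the loops.
\item You should say that $C_1\times_E C_2$ is connected, since the pulled-back class is nonzero. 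It is also already smooth, so no normalization is needed.
\end{itemize}
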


\begin{proof}
If the residue field has odd characteristic or characteristic zero, then by \cite[Lemma~7.15]{ABBR15a}, there is a map of smooth curves $p \colon C \to E$ specializing to $\psi$, where $C$ has genus $g$ and $E$ has genus 1.  By \cite[Lemma~5.9]{JensenLen18}, there exists a unique \'{e}tale double cover $f \colon \WC \to C$ specializing to $\pi$.

We show that the cover $p \circ f \colon \WC \to E$ is Galois.  Let $\tau \colon C \to C$ be the involution that transposes the sheets of the cover $p \colon C \to E$, and let $\eta \in \Jac_2 (C)$ be the unique nontrivial 2-torsion divisor in the kernel of $f^*$.  By \cite[Corollary~4.3]{Naranjo}, the cover is Galois if and only if $\tau^* (\eta) = \eta$.  The involution $\tau$ specializes to the involution of $\Gamma$ that transposes the sheets of the cover $\psi $.  Composing this involution with the cover $\pi \colon \WG \to \Gamma$ produces an isomorphic double cover.  Since the double cover lifts uniquely, it follows that $\tau^* (\eta) = \eta$.

Since the cover $p \circ f \colon \WC \to E$ is Galois, by \cite{Debarre} the Galois group must be the Klein-4 subgroup of $S_4$ that acts on the 4 sheets by products of 2 disjoint transpositions.  Each of these involutions specializes to an involution of $\WG$ that acts on the sheets in the same way.  In other words, the involutions $\iota , \iota_1$, and $\iota_2$ are specializations of involutions on $\WC$.   Thus, the diagram \eqref{eq:tropical} lifts to the diagram \eqref{eq:classical}.
\end{proof}

We now prove the main theorem.

\begin{proof}[Proof of Theorem~\ref{Thm:Lifting}]
The case where $r=1$ is proven in \cite[Lemma~5.1]{PBN}, so we assume that $r \geq 2$.  The tropicalization of the Prym-Brill-Noether variety $V^r (C,f)$ is a closed subset of $V^r (\Gamma,\pi)$, and the set of vertex avoiding divisor classes is dense in $V^r (\Gamma,\pi)$, so it suffices to consider the case where $[D]$ is vertex avoiding.  By Lemma~\ref{Lem:TropIntersection}, there exist divisors $D_1 , \ldots , D_{g-2-r} \in P^r (\Gamma,\pi) - P^1 (\Gamma,\pi)$ such that
\[
X = \bigcap_{i=1}^{g-2-r} [V^1 (\Gamma,\pi) + D_i ] \cap V^r (\Gamma,\pi)
\]
is finite, contains $[D]$, and
\[
\# X = 2 \cdot \frac{(g-1)!}{(r+1)!}.
\]

Let $\mathcal{D}_i \in P^r (C,f) - P^1 (C,f)$ be a divisor on $C$ such that $\Trop (\mathcal{D}_i) = D_i$, and let $\mathcal{X} \subseteq \Pic (\WC)$ be
\[
\mathcal{X} = \bigcap_{i=1}^{g-2-r} [V^1 (C,f) + \mathcal{D}_i ] \cap V^r (C,f).
\]
Since $\Trop (\mathcal{X}) \subseteq X$, we see that $\mathcal{X}$ is finite, and $g(C_1) = 1$ by Lemma~\ref{Lem:Nonvacuous}.  By Lemma~\ref{Lem:Intersection}, we have
\[
\# \mathcal{X} \geq 2 \cdot \frac{(g-1)!}{(r+1)!}.
\]

By Proposition~\ref{Prop:LCI}, there exists an open neighborhood $U$ of $[D]$ and effective divisors $E_1 , \ldots , E_{r+1}$ of degree $r-1$ such that
\[
V^r (\Gamma,\pi) \cap U = \Big( \bigcap_{i=1}^{r+1} V^1 (\Gamma,\pi) + E_i - \iota^*E_i \Big) \cap U .
\]
Let $\mathcal{E}_i$ be an effective divisor on $C$ such that $\Trop (\mathcal{E}_i) = E_i$.  By \cite[Proposition~5.4]{PBN}, the Prym-Brill-Noether variety $V^r (C,f)$ is contained in $\cap_{i=1}^{r+1} (V^1 (C,f) + \mathcal{E}_i - \iota^* \mathcal{E}_i)$.

In the open set $U$, the hypersurfaces $\Trop (V^1 (C,f)) + D_i$ and $\Trop (V^1 (C,f) + E_i - \iota^* E_i$ are translates of coordinate tori, and each has multiplicity one by \cite[Lemma~5.1]{PBN}.  By \cite{Rabinoff12}, it follows that there is at most one point tropicalizing to $[D]$ in the intersection of these $g-1$ hypersurfaces.  In other words, the tropicalization map $\Trop \colon \mathcal{X} \to X$ is injective.  But since $\# \mathcal{X} \geq \# X$, it must in fact be bijective.
\end{proof}

We note a few consequences of Theorem~\ref{Thm:Lifting}.

\begin{corollary}
\label{Cor:Reduced}
Let $\Gamma$ be a bielliptic loop of loops of genus $g$ and $\pi \colon \WG \to \Gamma$ the quotient by the antipodal involution.  If $f \colon \WC \to C$ is an \'{e}tale double cover of smooth curves specializing to $\pi$ with $C$ bielliptic, then $V^r (C,f)$ is reduced.  In particular, if $g-1 = r+1$, then $V^r (C,f)$ is smooth.
\end{corollary}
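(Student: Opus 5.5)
Corollary~\ref{Cor:Reduced} is read off from the proof of Theorem~\ref{Thm:Lifting}. The mechanism is a comparison between the scheme-theoretic length of the finite intersection $\mathcal{X}$ and the number of points of its tropicalization $X$. Fix $r \geq 2$; the case $r=1$ would be handled the same way using \cite[Lemma~5.1]{PBN}. Then choose translates $D_1,\ldots,D_{g-2-r}$ as in Lemma~\ref{Lem:TropIntersection}, with lifts $\mathcal{D}_i$, and form
\[
\mathcal{X} = \bigcap_{i=1}^{g-2-r} [V^1 (C,f) + \mathcal{D}_i ] \cap V^r (C,f)
\]
scheme-theoretically. By Lemma~\ref{Lem:Nonvacuous} we have $g(C_1)=1$, so Lemma~\ref{Lem:Intersection} applies. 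It gives $[V^r(C,f)] = m_r \Theta^{r+1}/(r+1)!$ with $m_r \geq 2$.

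\textbf{Step 1: upper bound on the length.} The class of $V^1(C,f)$ is $\Theta$ restricted to the Prym, so for general translates the length of $\mathcal{X}$ equals $m_r (g-1)!/(r+1)!$. The proof of Theorem~\ref{Thm:Lifting} shows that $\mathcal{X}$ has at most one point over each point of $X$. I would strengthen this using Proposition~\ref{Prop:LCI} together with Rabinoff's theorem \cite{Rabinoff12}. Near each vertex-avoiding $[D]$, $V^r(C,f)$ is contained in the intersection of the $r+1$ hypersurfaces $V^1(C,f)+\mathcal{E}_i-\iota^*\mathcal{E}_i$. By \cite[Lemma~5.1]{PBN}, the tropicalizations of these hypersurfaces and of the $V^1(C,f)+\mathcal{D}_i$ are coordinate-torus translates of multiplicity one meeting transversally. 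Rabinoff's theorem then says the intersection of these $g-1$ hypersurfaces has total multiplicity one over $[D]$. Since $\mathcal{X}$ is a closed subscheme of that intersection, its local length at the unique point over $[D]$ is at most $1$. Hence the length of $\mathcal{X}$ is at most $\# X = 2(g-1)!/(r+1)!$.

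\textbf{Step 2: reducedness of $\mathcal{X}$ and $m_r = 2$.} Combining Step 1 with $m_r \geq 2$ forces $m_r = 2$. It also shows that $\mathcal{X}$ is reduced, consisting of $\#X$ reduced points.

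\textbf{Step 3: spread to all of $V^r(C,f)$.} By Corollary~\ref{Cor:Irreducible} and Proposition~\ref{Prop:BiellipticGenus1}, $V^r(C,f)=Y_{r+1}$ is irreducible when $g>r+2$. Choose the translates $\mathcal{D}_i$ general. The intersection $\mathcal{X}$ then meets a dense open subset of every component of the scheme $V^r(C,f)$, including any embedded or nonreduced structure. A generic point of nonreducedness would produce a nonreduced point of $\mathcal{X}$, which Step 2 rules out. So $V^r(C,f)$ is generically reduced. To conclude that it is reduced, I would use Cohen--Macaulayness of $V^r(C,f)$: it is a Prym degeneracy locus of the expected dimension $g-r-2$, by the Prym analogue of Brill--Noether determinantal theory. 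I expect this to be the main obstacle. It may require invoking the standard description of $V^r$ as a symmetric degeneracy locus, so that generic reducedness implies reducedness. When $g-1=r+1$, $V^r(C,f)$ is zero-dimensional and equals $\mathcal{X}$ itself with no translates. It is therefore two reduced points, hence smooth.

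A subtle point in Step 1 is the genericity of the translates. The translates in Lemma~\ref{Lem:TropIntersection} are tropically chosen and pass through a given $[D]$, whereas Step 3 needs algebraically general $\mathcal{D}_i$. I would handle this by noting that the length computation is constant in flat families of translates. It therefore suffices to find one choice with finite intersection of length at most $\#X$, which is exactly what Steps 1--2 provide.
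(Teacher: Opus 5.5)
Your route is the paper's own. Its proof says the argument for Theorem~\ref{Thm:Lifting} makes $\mathcal{X}$ a set of distinct reduced points, so $V^r(C,f)$ is smooth there, and then uses Cohen--Macaulayness to exclude embedded points. Your Step~1 usefully spells out what this needs. Namely, $V^r(C,f)$ must lie in $\bigcap_i (V^1(C,f)+\mathcal{E}_i-\iota^*\mathcal{E}_i)$ scheme-theoretically, so that multiplicity one in Rabinoff's theorem bounds the local length of $\mathcal{X}$ by one. Your use of the irreducibility of $Y_{r+1}$ supplies the fact, implicit in the paper, that these smooth points lie on every component. The computation $m_r=2$ is not needed here.

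Your worry about genericity of the translates is also unnecessary. If $\mathcal{X}$ is reduced at $x$, then the maximal ideal of $\mathcal{O}_{V^r(C,f),x}$ is generated by the $g-r-2$ local equations of the translates. So $V^r(C,f)$ is regular at $x$ already for the specific tropically chosen translates. On the other hand, general translates miss lower-dimensional embedded components, so your parenthetical claim that $\mathcal{X}$ meets them is false. Only the Cohen--Macaulay input handles those.

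That input is where your proposal breaks. The expected dimension of $V^r(C,f)$ is $g-1-\binom{r+1}{2}$, not $g-r-2$, and for $r\geq 2$ the two agree only when $r=2$. For $r\geq 3$ the locus $V^r(C,f)=Y_{r+1}$ has excess dimension. So the standard fact that a Prym--Brill--Noether locus of the expected codimension is Cohen--Macaulay does not apply; that fact comes from the local model as a locus of skew-symmetric matrices of corank at least $r+1$. Nothing else in your argument rules out embedded points.

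As written, then, you prove the corollary for $r=2$ and in the case $g=r+2$, where $V^r(C,f)=\mathcal{X}$ and no Cohen--Macaulay input is needed. For $r\geq 3$ and $g>r+2$ you obtain only generic reducedness. The paper itself simply asserts that $V^r(C,f)$ is Cohen--Macaulay without justification, so this step cannot be borrowed from it. In the excess-dimensional range you need an independent argument for Cohen--Macaulayness, or some other way to exclude embedded points.
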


\begin{proof}
The proof of Theorem~\ref{Thm:Lifting} demonstrates that the intersection of $V^r (C,f)$ with $r+1$ general translates of $V^1 (C,f)$ consists of distinct reduced points.  Thus, $V^r (C,f)$ is smooth at these points.  Since $V^r (C,f)$ is Cohen-Macualuay, it has no embedded points, hence it is reduced.
\end{proof}

\begin{corollary}
\label{Cor:Class}
Let $r \geq 2$.  For a general \'{e}tale double cover $f \colon C \to \WC \in \mathcal{R}_{B,g,0}$, one has the following equality of cohomology classes:
\[
[ V^r (C,f) ] =  2 \cdot \frac{\Theta^{r+1}}{(r+1)!} .
\]
\end{corollary}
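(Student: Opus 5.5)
By Lemma~\ref{Lem:Intersection}, for a general cover in $\mathcal{R}_{B,g,0}$ there is an integer $m_r \geq 2$ with $[V^r(C,f)] = m_r \cdot \frac{\Theta^{r+1}}{(r+1)!}$. It therefore suffices to show $m_r \leq 2$, and I would get this by counting points. Intersecting a cycle of class $m_r \cdot \frac{\Theta^{r+1}}{(r+1)!}$ with $g-2-r$ translates of $\Theta$ in the $(g-1)$-dimensional Prym variety gives degree
\[
m_r \cdot \frac{\Theta^{g-1}}{(r+1)!} = m_r \cdot \frac{(g-1)!}{(r+1)!},
\]
using $\Theta^{g-1} = (g-1)!$ for a principal polarization. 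Here $V^1(C,f)$ is a translate of the Prym theta divisor, so it represents $\Theta$.

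First I reduce to a cover that tropicalizes well. The class $[V^r(C,f)]$ is locally constant in flat families over the irreducible component $\mathcal{R}_{B,g,0}$, where $V^r$ has constant dimension $g-r-2$ by Proposition~\ref{Prop:BiellipticGenus1}. So it is enough to compute it for a single cover $f\colon \WC \to C$ specializing to $\pi$, which exists by Lemma~\ref{Lem:Nonvacuous}. To make this rigorous I would either pass to a suitable nonarchimedean field over which this cover is defined, or argue that $m_r$ is the same for all covers in $\mathcal{R}_{B,g,0}$ because the integer multiple of the minimal class is deformation invariant.

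For this cover I reuse the set $\mathcal{X}$ from the proof of Theorem~\ref{Thm:Lifting}.
\begin{enumerate}
\item Tropicalization gives an injection $\mathcal{X} \to X$, so $\#\mathcal{X} \leq \#X = 2\frac{(g-1)!}{(r+1)!}$ by Lemma~\ref{Lem:TropIntersection}.
\item Each point of $\mathcal{X}$ is a transverse, multiplicity-one intersection point. This comes from the Rabinoff argument: the tropical hypersurfaces are multiplicity-one coordinate translates meeting transversally, and Proposition~\ref{Prop:LCI} says that locally $V^r(C,f)$ is cut out by translates of $V^1(C,f)$. This is the same input that gives reducedness in Corollary~\ref{Cor:Reduced}.
\item Since the intersection $\mathcal{X}$ is proper and zero-dimensional, its length equals the intersection number, so
\[
m_r \cdot \frac{(g-1)!}{(r+1)!} = \text{length}(\mathcal{X}) = \#\mathcal{X} \leq 2\frac{(g-1)!}{(r+1)!}.
\]
\end{enumerate}
Combining with $m_r \geq 2$ gives $m_r = 2$.

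The main obstacle is step (2), identifying the length of $\mathcal{X}$ with its cardinality. Rabinoff's theorem says the number of points of the algebraic intersection over a tropical point, counted with multiplicity, equals the tropical intersection multiplicity. So I would first verify that each point of $X$ has tropical multiplicity one. In the neighborhood $U$ of a vertex-avoiding point, $X$ is a transverse intersection of $g-1$ coordinate hypertori: $r+1$ come from Proposition~\ref{Prop:LCI} and $g-2-r$ from the $D_i$. Their lattice index is $1$, which follows from their being coordinate directions in the orientable-representative coordinates.

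A secondary concern is that Rabinoff's count uses the hypersurfaces $V^1(C,f)+\mathcal{E}_i-\iota^*\mathcal{E}_i$ rather than $V^r(C,f)$ itself. I would handle this by noting that, by Proposition~\ref{Prop:LCI} and Rabinoff, the complete intersection near each point has length one. Since $V^r(C,f)$ is contained in it, its local length at that point is also at most one, hence exactly one.
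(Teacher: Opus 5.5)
Your proposal is correct and follows essentially the paper's argument. Lemma~\ref{Lem:Intersection} gives $m_r \geq 2$, and the count $\#\mathcal{X} \le \#X = 2\cdot\frac{(g-1)!}{(r+1)!}$ from the proof of Theorem~\ref{Thm:Lifting}, combined with multiplicity one at each point (the reducedness input of Corollary~\ref{Cor:Reduced}), forces $m_r = 2$; you use injectivity of tropicalization directly where the paper quotes the resulting bijection, and you spell out the length-one and ``choose the cover specializing to $\pi$ general enough'' steps that the paper leaves implicit.
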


\begin{proof}
By Lemma~\ref{Lem:Intersection}, there exists an integer $m_r \geq 2$ such that
\[
[ V^r (C,f) ] =  m_r \cdot \frac{\Theta^{r+1}}{(r+1)!} .
\]
Moreover, there exist divisors $D_1 , \ldots , D_{g-r-2} \in P^r (C,f) - P^1 (C,f)$ such that
\[
\mathcal{X} = \bigcap_{i=1}^{g-r-2} [V^1 (C,f) + D_i ] \cap V^r (C,f)
\]
is finite.  The proof of Theorem~\ref{Thm:Lifting} demonstrates that $\mathcal{X}$ consists of exactly $2 \cdot \frac{(g-1)!}{(r+1)!}$ points, and by Corollary~\ref{Cor:Class}, this intersection is reduced.  It follows that $m_r = 2$.
\end{proof}

\bibliography{math}

\end{document}